\newtheorem{theorem}{Theorem}
\newtheorem{definition}{Definition}
\newtheorem{lemma}{Lemma}
\newtheorem{proposition}{Proposition}
\newtheorem{remark}{Remark}
\author{Mathias P\'etr\'eolle \addressmark{1}\thanks{Email: \email{petreolle@math.univ-lyon1.fr}. }
}  
\title{A Nekrasov-Okounkov type formula for $\widetilde{C}$}
\address{\addressmark{1} Institut Camille Jordan, Lyon, France}
\keywords{Macdonald's identities, Dedekind $\eta$ function, affine root systems, integer partitions, t-cores}
\begin{document}
\maketitle

\begin{abstract}
\paragraph{Abstract.} In 2008, Han rediscovered an expansion of powers of Dedekind $\eta$ function due to Nekrasov and Okounkov by using Macdonald's identity in type $\widetilde{A}$. In this paper, we obtain new combinatorial expansions of powers of $\eta$, in terms of partition hook lengths, by using Macdonald's identity in type $\widetilde{C}$ and a new bijection. As applications, we derive a symplectic hook formula and a relation between Macdonald's identities in types $\widetilde{C}$, $\widetilde{B}$, and $\widetilde{BC}$.

\paragraph{R\'esum\'e.} En 2008, Han a red\'ecouvert un d\'eveloppement des puissances de la fonction $\eta$ de Dedekind, d\^u \`a Nekrasov et Okounkov, en utilisant l'identit\'e de Macdonald en type $\widetilde{A}$. Dans cet article, nous obtenons un nouveau d\'eveloppement combinatoire des puissances de $\eta$, en termes de longueurs d'\'equerres de partitions, en utilisant l'identit\'e de Macdonald en type $\widetilde{C}$ ainsi qu'une nouvelle bijection. Plusieurs applications en sont d\'eduites, comme un analogue symplectique de la formule des \'equerres, ou une relation entre les identit\'es de Macdonald en types $\widetilde{C}$, $\widetilde{B}$ et $\widetilde{BC}$.

\end{abstract}

\section{Introduction}
Recall the Dedekind $\eta$ function, which is a weight $1/2$ modular form, defined as follows:
\begin{equation}
\eta(x) =x^{1/24} \prod _{k \geq 1} (1-x^k),
\end{equation}
where $|x| < 1$ (we will assume this condition all along this paper). Apart from its modular properties, due to the factor $x^{1/24}$, this function plays a fundamental role in combinatorics, as it is related to the generating function of integer partitions. Studying expansions of powers of $\eta$ is natural, in the sense that it yields a certain amount of interesting questions both in combinatorics and number theory, such as Lehmer's famous conjecture (see for instance \cite{JPS}).
In 2006, in their study of the theory of Seiberg-Witten on supersymetric gauges in particle physics \cite{NO}, Nekrasov and Okounkov obtained the following formula:
\begin{equation}\label{nekrasov}
\prod_{k \geq 1} (1-x^k)^{z-1} = \sum_{\lambda \in \mathcal{P}} x^{|\lambda|}  \prod_{h\in \mathcal{H}(\lambda)} \left( 1-\frac{z}{h^2} \right),
\end{equation}
where $\mathcal{P}$ is the set of integer partitions and $\mathcal{H}(\lambda)$ is the multiset of hook lengths of $\lambda$ (see Section~\ref{section1} for precise definitions).
In 2008, this formula was rediscovered and generalized by Han \cite{HAN}, through two main tools, one arising from an algebraic context and the other from a more combinatorial one. From this result, Han derived many applications in combinatorics and number theory, such as the marked hook formula or a reformulation of Lehmer's conjecture. Formula \eqref{nekrasov} was next proved and generalized differently by Iqbal \emph{et al.} in 2013 \cite{IEA} by using plane partitions, Cauchy's formula for Schur function and the notion of topological vertex.

The proof of Han uses on the one hand a bijection between $t$-cores and some vectors of integers, due to Garvan, Kim and Stanton in their proof of Ramanujan's congruences \cite{GKS}. Recall that $t$-cores had originally been introduced in representation theory to study some characters of the symmetric group \cite{GK}. On the other hand, Han uses Macdonald's identity for affine root systems. It is a generalization of Weyl's formula for finite root systems $R$ which can itself be written as follows:
\begin{equation}
\prod _{\alpha >0} (e^{\alpha/2} - e^{-\alpha/2}) = \sum_{w \in W(R)} \varepsilon (w) e^{w \rho},
\end{equation}
where the sum is over the elements of the Weyl group $W(R)$, $\varepsilon$ is the sign, and $\rho$ is an explicit vector depending on $W(R)$. Here, the product ranges over the positive roots $R^+$, and the exponential is formal. Macdonald specialized his formula for several affine root systems and exponentials. In all cases, when the formal exponential is mapped to the constant function equal to 1, the product side can be rewritten as an integral power of Dedekind $\eta$ function. In particular, the specialization used in~\cite{HAN} corresponds  to the type $\widetilde{A}_t$, for an odd positive integer $t$, and reads (here $\|.\|$ is the euclidean norm):
\begin{equation}\label{equaA}
\eta(x)^{t^2-1} =c_0 \sum_{{\bf v}} x^{\|{\bf v}\|^2 /2t}\prod_{i<j}(v_i-v_j) , \quad \mbox{with~} c_0:= \frac{(-1)^{(t-1)/2}}{1! 2! \cdots (t-1)!},
\end{equation}
where the sum is over $t$-tuples ${\bf v} :=(v_0,\ldots, v_{t-1}) \in \mathbb{Z}^{t}$ such that $v_i \equiv i \mbox{~mod~} t$ and $v_0+\cdots +v_{t-1}=0$. Han next uses a refinement of the aforementioned bijection to transform the right-hand side into a sum over partitions,  and proves \eqref{nekrasov} for all odd integers $t$. Han finally transforms the right-hand side through very technical considerations to show that \eqref{nekrasov} is in fact true for all complex number $t$. A striking remark is that the factor of modularity $x^{(t^2-1)/24}$ in $\eta(x)^{t^2-1}$ cancels naturally in the proof when the bijection is used. 
\smallskip

This approach immediatly raises a question, which was asked by Han in~\cite{HAN}: can we use specializations of Macdonald's formula for other types to find new combinatorial expansions of the powers of $\eta$? In the present paper, we give a positive answer for the case of type  $\widetilde{C}$ and, as shall be seen later, for types $\widetilde{B}$ and $\widetilde{BC}$. In the case of type $\widetilde{C}_t$, for an integer $t \geq 2$, Macdonald's formula reads:
\begin{equation}\label{equaC}
\eta(x)^{2t^2+t} = c_1 \sum_{{\bf v}} x^{\|{\bf v}\|^ 2/(4t+4)} \prod_i v_i \prod_{i<j}(v_i^2-v_j^2) ,\quad \mbox{with~} c_1:= \frac{(-1)^{\lfloor t/2 \rfloor}}{1! 3! \cdots (2t+1)!},
\end{equation}
where the sum ranges over $t$-tuples ${\bf  v} :=(v_1,\ldots, v_t) \in \mathbb{Z}^t$ such that $v_i \equiv i \mbox{~mod~} 2t+2$. The first difficulty in providing an analogue of \eqref{nekrasov} through \eqref{equaC} is to find which combinatorial objects should play the role of the partitions $\lambda$. Our main result is the following possible answer.

\begin{theorem}\label{theoremeintro} For any complex number $t$, with the notations and definitions of Section~\ref{section1}, the following expansion holds:
\begin{equation}\label{eqtheoremeintro}
\prod_{k \geq 1}(1-x^k) ^{2t^2+t} = \sum_{\lambda \in DD}\delta_\lambda\,   x^{|\lambda|/2} \prod_{h \in  \mathcal{H}(\lambda)} \left( 1- \frac{2t+2}{h\, \varepsilon_h }\right),
\end{equation}
where the sum is over doubled distinct partitions, $\delta_\lambda$ is equal to $1$ (\emph{resp.} $-1$) if the Durfee square of $\lambda$ is  of even (\emph{resp.} odd) size, and $\varepsilon_h$ is equal to $-1$ if $h$ is the hook length of a box strictly above the diagonal and to $1$ otherwise.
\end{theorem}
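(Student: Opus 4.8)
The plan is to follow the strategy that Han used in type $\widetilde{A}$, replacing his bijection for $t$-cores by a new bijection adapted to the symplectic structure of type $\widetilde{C}$. I would start from Macdonald's identity \eqref{equaC} and first dispose of the factor of modularity: writing $\eta(x)^{2t^2+t} = x^{(2t^2+t)/24}\prod_{k\geq1}(1-x^k)^{2t^2+t}$, I observe that the ``minimal'' vector ${\bf v}_0=(1,2,\dots,t)$ (the vector of smallest norm compatible with the residue conditions) satisfies $\|{\bf v}_0\|^2/(4t+4)=\frac{t(t+1)(2t+1)}{6}\cdot\frac{1}{4(t+1)}=(2t^2+t)/24$. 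Hence, after dividing \eqref{equaC} by $x^{(2t^2+t)/24}$, the exponent $\|{\bf v}\|^2/(4t+4)$ is shifted to a non-negative quantity that I expect to equal $|\lambda|/2$, where $\lambda$ is the doubled distinct partition attached to ${\bf v}$. This is consistent with a doubled distinct partition of Frobenius type $(a_i\mid a_i-1)_{1\le i\le r}$ having $|\lambda|=2\sum_i a_i$, so that $|\lambda|/2$ is a genuine integer and the modularity factor cancels exactly as in type $\widetilde{A}$.

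The heart of the argument, and the step I expect to be the main obstacle, is the construction and verification of the bijection ${\bf v}\mapsto\lambda$. The residue conditions $v_i\equiv i\pmod{2t+2}$ single out the $t$ ``positive'' residues $1,\dots,t$ among the $2t+2$ classes, while the Weyl symmetry $v\mapsto -v$ of type $C$ identifies class $i$ with class $2t+2-i$; this is precisely the symmetry that the doubling operation should encode. Concretely, I would interpret the set $\{v_1,\dots,v_t\}$, together with its negatives and the forced values in the remaining classes, as a symmetric beta-set and read off from it the principal (diagonal) hook lengths $2a_1>\dots>2a_r$ of a doubled distinct partition. One then has to check that this assignment is a bijection onto $DD$ and that it transports the quadratic form correctly, namely $\|{\bf v}\|^2/(4t+4)-(2t^2+t)/24=|\lambda|/2$.

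It then remains to match the polynomial parts. Using the factorization $v_i^2-v_j^2=(v_i-v_j)(v_i+v_j)$, I would show that $c_1\prod_i v_i\prod_{i<j}(v_i^2-v_j^2)$, once normalized against its value at the minimal vector, becomes the hook-length product $\prod_{h\in\mathcal{H}(\lambda)}\bigl(1-\tfrac{2t+2}{h\,\varepsilon_h}\bigr)$. The dichotomy in $\varepsilon_h$ is exactly what one should expect from this factorization: the factors $(v_i-v_j)$ and $v_i$ should produce the hook lengths of boxes on or below the diagonal (giving $1-\tfrac{2t+2}{h}$), whereas the factors $(v_i+v_j)$ should produce the hook lengths of boxes strictly above the diagonal (giving $1+\tfrac{2t+2}{h}$). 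The residual sign, coming from reordering ${\bf v}$ into a dominant vector and from $c_1=(-1)^{\lfloor t/2\rfloor}/(1!3!\cdots(2t+1)!)$, should then collapse to $\delta_\lambda=(-1)^r$, governed by the parity of the Durfee square size $r$. At this point the identity holds for every integer $t\ge 2$ (and, should the bijection first produce only the doubled distinct $(2t+2)$-cores, one passes to all of $DD$ by the usual core--quotient generating-function argument).

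Finally, I would remove the integrality hypothesis on $t$ by a polynomiality argument. For each fixed $n$, the coefficient of $x^{n/2}$ on the left-hand side is a polynomial in $2t^2+t$, hence in $t$; on the right-hand side only the finitely many $\lambda\in DD$ with $|\lambda|/2=n/2$ contribute, and each weight $\prod_{h}(1-\tfrac{2t+2}{h\varepsilon_h})$ is a polynomial in $t$. Two polynomials in $t$ that agree for all integers $t\ge 2$ agree identically, so the expansion \eqref{eqtheoremeintro} holds for every complex number $t$.
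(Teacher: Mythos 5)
Your global strategy is the paper's: start from Macdonald's identity \eqref{equaC}, replace the sum over vectors by a sum over partition-like objects via a Garvan--Kim--Stanton type bijection, identify the normalized polynomial weight with a signed hook-length product, and conclude by polynomiality in $t$. The one structural difference is that you map vectors directly onto doubled distinct $(2t+2)$-cores, whereas the paper first maps onto pairs $(\lambda,\mu)\in SC_{(t+1)}\times DD_{(t+1)}$ (Theorem~\ref{phi}), proves the intermediate expansion over $SC\times DD$ (Theorem~\ref{thmprincipal}), and only at the end doubles the principal hook lengths to land in $DD$ (Theorem~\ref{bijcouple}); your direct map is essentially the composite of these two bijections, so this difference is inessential. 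Your observations on the cancellation of the modularity factor and on $|\lambda|/2$ being an integer are correct.

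The genuine gap is the step you dispose of in one parenthesis: passing from the sum over doubled distinct $(2t+2)$-cores to the sum over \emph{all} of $DD$. The ``usual core--quotient generating-function argument'' is the wrong tool here: a core--quotient factorization multiplies the core sum by the generating function of the quotients and therefore produces a \emph{different} identity (it is the device for formulas in which hooks divisible by $2t+2$ carry a nontrivial weight, as in Han's refinements). What is actually needed is a vanishing statement: for each integer $t\geq 2$, the weight $\prod_{h}\bigl(1-\frac{2t+2}{h\,\varepsilon_h}\bigr)$ must vanish for every $\lambda\in DD$ that is not a $(2t+2)$-core. This is not immediate, because the factor $1-\frac{2t+2}{h\,\varepsilon_h}$ vanishes only when $h=2t+2$ \emph{and} the box lies on or below the diagonal ($\varepsilon_h=1$); one must therefore show that every doubled distinct non-core contains such a box, a statement strictly stronger than the classical ``a non-$t$-core has a hook of length $t$.'' The paper devotes the bulk of the proof of Theorem~\ref{thmprincipal} to exactly this point, via the characterization of admissible sets of principal hook lengths in Remark~\ref{remarquedelta}(iii) and a case analysis showing the product $Q$ vanishes otherwise. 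Without this vanishing, your final polynomiality argument has nothing to stand on: for integer $t$ you would only know the identity with a $t$-dependent index set (the cores), and the coefficientwise comparison of polynomials in $t$ would not concern the two sides of \eqref{eqtheoremeintro}. A secondary remark: the identification of $c_1\prod_i v_i\prod_{i<j}(v_i^2-v_j^2)$ with the hook-length product, which you describe in one sentence via $v_i^2-v_j^2=(v_i-v_j)(v_i+v_j)$, is the technical heart of the matter (in the paper, Lemmas~\ref{lemme han}--\ref{lemme-recap} and Theorem~\ref{bijcouple}, proved by induction on principal hooks using $2t+2$-compact sets, with the sign bookkeeping yielding $\delta_\lambda$); as written, your treatment of this step is an assertion rather than a proof, though the intended mechanism is the right one.
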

 To prove this, we will use~\eqref{equaC} and a bijection obtained through results of~\cite{GKS}. Many applications can be derived from Theorem~\ref{theoremeintro}, which we are able to generalize with more parameters as did Han for \eqref{nekrasov}. However, we will only highlight two consequences, a combinatorial one and a more algebraic one. The first is the following symplectic analogue of the famous hook formula (see for instance \cite{EC}), valid for any positive integer $n$:
\begin{equation}\label{hookf}
\sum_{\stackrel{\lambda \in DD}{  |\lambda|= 2n} } \prod_{h \in \mathcal{H}(\lambda)}\frac{1}{h} = \frac{1}{2^n n!}.
\end{equation}
The second, which is expressed in Theorem~\ref{equi}, is a surprising link between the family of Macdonald's formulas in type $\widetilde{C}_t$ (for all integers $t \geq 2$), the one in type $\widetilde{B}_t$ (for all integers $t \geq 3$), and the one in type $\widetilde{BC}_t$ (for all integers $t \geq 1$).

The paper is organized as follows. In Section~\ref{section1}, we recall the definitions and notations regarding partitions, $t$-cores, self-conjugate and doubled distinct partitions. Section~\ref{section2} is devoted to  sketching the proof of Theorem~\ref{theoremeintro} using a new bijection between the already mentioned subfamilies of partitions and some vectors of $\mathbb{Z}^{t}$, and its properties that we will explain. In Section~\ref{section3}, we derive some applications from Theorem~\ref{theoremeintro}, such as the symplectic hook formula \eqref{hookf}, and the connection between \eqref{equaC} and Macdonald's identities in types $\widetilde{B}$ and $\widetilde{BC}$, which are shown in Theorem~\ref{equi} to be all generalized by Theorem~\ref{theoremeintro}.

\section{Integer partitions and $t$-cores}\label{section1}
In all this section, $t$ is a fixed positive integer.
\subsection{Definitions}\label{defs}
We recall the following definitions, which can be found in \cite{EC}. A \emph{partition} $\lambda=(\lambda_1,\lambda_2,\ldots, \lambda_\ell)$ of the integer $n \geq 0$ is a finite non-increasing sequence of positive integers whose sum is $n$. The $\lambda_i$'s are the parts of $\lambda$, $\ell := \ell(\lambda)$ is its length, and $n$ its weight, denoted by $|\lambda|$. Each partition can be represented by its Ferrers diagram as shown in Figure~\ref{fig1}, left. (Here we represent the Ferrers diagram in French convention.)

\begin{figure}[h!] \begin{center}
\includegraphics[scale=1.2]{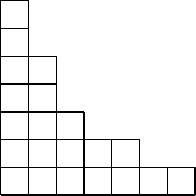}
\hspace*{1cm}
\includegraphics[scale=1.2]{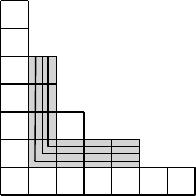}
\hspace*{1cm}
\includegraphics[scale= 1.2]{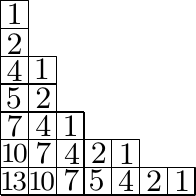}
\caption{\label{fig1} The Ferrers diagram of the partition $(7,5,3,2,2,1,1)$, a principal hook and the hook lengths.}
\end{center}
\end{figure}

For each box $v=(i,j)$ in the Ferrers diagram of $\lambda$ (with $i \in \{1,\ldots, \ell\}$ and $j \in \{1,\ldots,\lambda_i \}$), we define the \emph{hook of $v$} as the set of boxes $u$ such that either $u$ lies on the same row and above $v$, or $u$ lies on the same column and on the right of $v$. The \emph{hook length} of $v$ is the cardinality of the hook of $v$ (see Figure~\ref{fig1}, right).
The hook of $v$ is called \emph{principal} if $v=(i,i)$ (\textit{i.e.}  $v$ lies on the diagonal of $\lambda$, see Figure~\ref{fig1}, center). The \emph{Durfee square} of $\lambda$ is the greatest square included in its Ferrers diagram, the length of its side is the Durfee length, denoted by $D(\lambda)$: it is also the number of principal hooks. We denote by $\delta_\lambda$ the number $(-1)^{D(\lambda)}$.
\begin{definition}Let $\lambda $ be a partition. We say that $\lambda$ is a \emph{ $t$-core} if and only if no hook length of $\lambda$ is a multiple of $t$. 
\end{definition}
Recall \cite{HAN} that $\lambda$ is a $t$-core if and only if no hook length of $\lambda$ is equal to $t$. We denote by $\mathcal{P}$ the set of partitions and by  $\mathcal{P}_{(t)}$ the subset of $t$-cores.
\begin{definition}Let  $\lambda $ be a partition. The \emph{$t$-core of $\lambda$} is the partition $T(\lambda)$ obtained from $\lambda$  by removing in its Ferrers diagram all the ribbons of length $t$, and by repeating this operation until we can not remove any ribbon. 
\end{definition}

\begin{figure}[h!]
\begin{center}
\includegraphics[scale=1]{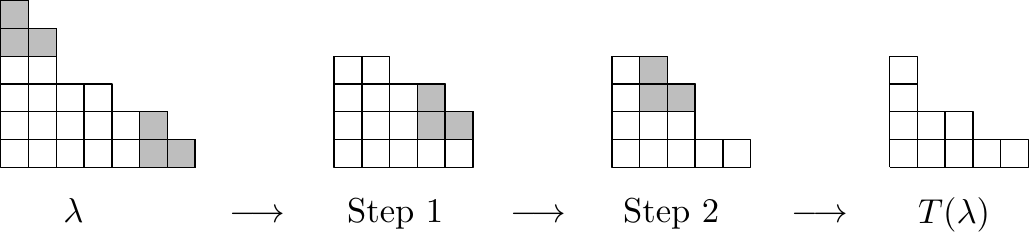}
\caption{\label{fig2} The construction of the $3$-core of the partition $\lambda=(7,6,4,2,2,1)$. In grey, the deleted ribbons.}
\end{center}
\end{figure}

The definition of ribbons can be found in \cite{EC}, see Figure~\ref{fig2} for an example. Note that $T(\lambda)$ does not depend on the order of removal (see \cite[p. 468]{EC} for a proof). In particular, as a ribbon of length $t$ corresponds bijectively to a box with hook length $t$, the $t$-core $T(\lambda)$ of a partition $\lambda$ is itself a $t$-core.
\subsection{$t$-cores of partitions}
We will need restrictions of a bijection from \cite{GKS} to two subsets of $t$-cores. First, we recall this bijection. Let $\lambda$ be a $t$-core, we define the vector $ \phi(\lambda):=(n_0, n_1,\ldots, n_{t-1})$ as follows. We label the box $(i,j)$ of $\lambda$ by $(j-i)$ modulo $t$. We also label the boxes in the (infinite) column 0 in the same way, and we call the resulting diagram the \emph{extended t-residue diagram} (see Figure~\ref{fig3} below). A box is called \emph{exposed} if it is at the end of a row of the extended $t$-residue diagram. The set of boxes $(i,j)$ of the extended $t$-residue diagram satisfying $t(r-1)\leq j-i<tr$ is called a \emph{region} and labeled $r$. We define $n_i$ as the greatest integer $r$ such that the region labeled $r$ contains an exposed box with label $i$. 
\begin{figure}[h!]\begin{center}
\includegraphics[scale=1]{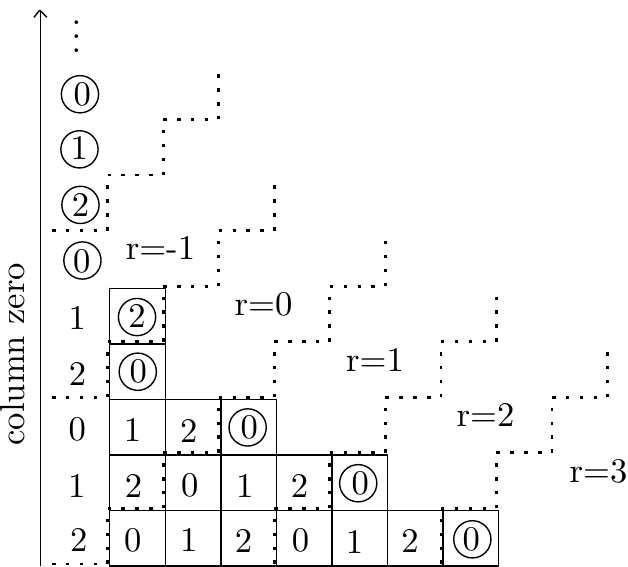}
\caption{\label{fig3}The extended 3-residue diagram of the $3$-core $\lambda=(7,5,3,1,1)$. The exposed boxes are circled.}
\end{center}
\end{figure}
\begin{theorem}[\cite{GKS}]\label{GKS}
The map $\phi$ is a bijection between $t$-cores and vectors of integers $ {\bf n}=(n_0,n_1,\ldots, n_{t-1})$ $ \in \mathbb{Z}^t$, satisfying $n_0+ \cdots +n_{t-1}=0$, such that: \begin{equation} |\lambda | = \frac{t\|{\bf n}\|^2}2 + {\bf b \cdot n}= \frac{t}{2}\sum_{i=0}^{t-1}n_i^2+ \sum_{i=0}^{t-1}in_i,\end{equation}
where~ ${\bf b } := (0,1,\ldots,t-1)$, $\|{\bf n}\|$ is the euclidean norm of ${\bf n}$, and ${\bf b \cdot n}$ is the scalar product of ${\bf b}$ and ${\bf n}$.
\end{theorem}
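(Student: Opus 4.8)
The plan is to prove Theorem~\ref{GKS} by analyzing the structure of the extended $t$-residue diagram directly, exhibiting both the inverse map and the weight formula from an explicit description of the boundary of a $t$-core. First I would encode a $t$-core $\lambda$ not by its parts but by its \emph{boundary path}: reading the infinite lattice path that traces the southeast border of the Ferrers diagram, each step is either horizontal or vertical, and the positions of the vertical steps form a set $\beta$ of beads (the first-column hook lengths, or equivalently the set $\{\lambda_i - i\}$ suitably normalized). The key observation, which I would verify from the definitions, is that a box $(i,j)$ is \emph{exposed} precisely when the corresponding border step pattern signals the end of a row, so that moving along the diagonals $j-i$ indexes exactly the regions. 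Labeling a box by $(j-i)\bmod t$ and grouping diagonals into blocks of width $t$ via $t(r-1)\le j-i<tr$ means the residue diagram repackages the bead set $\beta$ as $t$ separate one-dimensional subsequences, one for each residue $i\in\{0,\dots,t-1\}$.

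Next I would make precise that the $t$-core condition (no hook length divisible by $t$) is exactly the statement that, within each residue class $i$, the exposed boxes occupy a contiguous initial segment of regions with no gaps. This is the crucial input: for a general partition a residue class could have ``holes'', but removing a ribbon of length $t$ corresponds to sliding a bead down by one region, and a $t$-core is the terminal configuration in which each residue class is packed as low as possible. Consequently the data of $\lambda$ collapses to the single integer $n_i$ per class, namely the largest region index containing an exposed box of label $i$; this defines $\phi(\lambda)=(n_0,\dots,n_{t-1})$. I would then check that the constraint $n_0+\cdots+n_{t-1}=0$ arises because the total horizontal displacement of the boundary path must vanish (the path returns to the main diagonal), which balances the beads across residue classes. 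Inverting $\phi$ is then transparent: given $(n_0,\dots,n_{t-1})\in\mathbb{Z}^t$ with zero sum, one reconstructs the $t$ contiguous bead segments, interleaves them back into a single boundary path, and reads off the unique $t$-core; one verifies this is a two-sided inverse.

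For the weight formula I would compute $|\lambda|$ as the number of boxes below the boundary path, bookkeeping class by class. Shifting the residue-$i$ beads from their ``ground state'' (the empty partition's configuration) to the target region $n_i$ contributes two kinds of area: a quadratic self-interaction within each class, summing to $\tfrac{t}{2}\sum_i n_i^2$, and a linear term $\sum_i i\,n_i$ coming from the initial offset $b_i=i$ of residue class $i$ relative to the diagonal. The cleanest way to carry this out is to track the change in $|\lambda|$ under a single elementary bead move (adding one ribbon of length $t$ in class $i$ at region level $r$ changes the area in a way that telescopes), and then sum; alternatively one computes the area of the region enclosed between the staircase boundaries directly via a standard abacus/Maya-diagram area count. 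Either route yields $|\lambda|=\tfrac{t}{2}\|\mathbf n\|^2+\mathbf b\cdot\mathbf n$.

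The main obstacle I expect is the careful verification that the $t$-core condition corresponds \emph{exactly} to the contiguity (no-gaps) property of exposed boxes in each residue class, and that this makes $\phi$ well-defined and injective rather than merely a labeling; everything else (the inverse construction and the area count) is then bookkeeping on the boundary path. A secondary delicate point is normalizing the ``ground state'' correctly so that the linear term comes out as $\mathbf b\cdot\mathbf n$ with $\mathbf b=(0,1,\dots,t-1)$ and not some shifted version, since an off-by-one in the region indexing would corrupt both the zero-sum constraint and the weight formula. Since Theorem~\ref{GKS} is quoted from~\cite{GKS}, I would in practice cite it, but the abacus/boundary-path argument above is the self-contained proof I would reconstruct.
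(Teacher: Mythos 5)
The paper itself gives no proof of Theorem~\ref{GKS}: it is quoted directly from Garvan, Kim and Stanton \cite{GKS}, and the surrounding text only defines the map $\phi$ (labels, exposed boxes, regions) and verifies one example. So the real comparison is with the argument in \cite{GKS}, and your reconstruction is essentially that argument in abacus/boundary-path clothing: your bead encoding carries the same data as the paper's extended $t$-residue diagram, your key lemma (a partition is a $t$-core if and only if, in each residue class $i$, the regions containing an exposed box form a down-set $\{r \le n_i\}$, i.e.\ the beads are packed) is exactly what makes $\phi$ well defined and injective, and the zero-sum constraint, the inverse construction, and the area count are handled the same way there. Your identification of the contiguity lemma as the crux, and of the ground-state normalization as the delicate point for the linear term $\mathbf{b}\cdot\mathbf{n}$, is accurate.

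One concrete correction, though: of your two proposed routes to the weight formula, the first --- telescoping over moves that ``add one ribbon of length $t$ in class $i$'' --- cannot work. Adding or removing a $t$-ribbon never changes the underlying $t$-core (this invariance is precisely why $T(\lambda)$ is well defined), so starting from the empty partition and adding $t$-ribbons you only ever produce partitions whose $t$-core is empty; you can never reach a nonempty $t$-core. More simply, each ribbon adds exactly $t$ boxes, while a $t$-core such as $(1)$ has weight $1 \not\equiv 0 \bmod t$. The elementary moves that do change the vector $(n_0,\dots,n_{t-1})$ are single-box additions, under which a bead hops to an adjacent runner and two coordinates change by $+1$ and $-1$; telescoping over those is possible but messy. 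Your second route is the correct and standard one: for a $t$-core the beta-numbers in residue class $i$ form the packed set $\{i, i+t, \dots\}$ whose size is determined by $n_i$, and evaluating $|\lambda| = \sum_j \beta_j - \binom{s}{2}$ class by class yields $\frac{t}{2}\sum_i n_i^2 + \sum_i i\, n_i$ directly, with the linear term coming out as $\mathbf{b}\cdot\mathbf{n}$ exactly when the ground state is normalized by the zero-sum condition. Keep that computation and discard the ribbon-telescoping one.
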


For example,  the $3$-core $\lambda=(7,5,3,1,1)$ of Figure~\ref{fig3} satisfies $\phi(\lambda)$ = $(3,-2,-1)$. We indeed have $7+5+3+1+1=17= |\lambda|= \frac{3}{2}(9+4+1)-2-2.$
\subsection{Self-conjugate $t$-cores}
Next we come to the definition of a subfamily of $\mathcal{P}_{(t)}$ which naturally appears in the proof of our type $\widetilde{C}$ formula. We define \emph{self-conjugate $t$-cores} as elements $\lambda$ in $\mathcal{P}_{(t)}$ satisfying $\lambda=\lambda^*$, where $\lambda^*$ is the conjugate of $\lambda$ (see \cite{EC}). We denote by $SC_{(t)}$ the set of self-conjugate $t$-cores and by $\lfloor t/2 \rfloor$ the greatest integer smaller or equal to $t/2$. 
\begin{proposition}\label{autoconjugue} There is a bijection $\phi_1$ between the partitions $\lambda \in SC_{(t)}$ and vectors of integers $ \phi_1(\lambda):={\bf n}\in \mathbb{Z}^{\lfloor t/2 \rfloor},$ such that: 
\begin{equation}|\lambda | = t\|{\bf n}\|^2 + {\bf c} \cdot {\bf n} , \quad \mbox{with~}
{\bf c } :=  \left\lbrace \begin{array}{lr}
    (1,3,\ldots,t-1) &\mbox{for $t$ even}, 
\vspace*{0.05cm}\\ (2,4,\ldots,t-1)&\mbox{for $t$ odd}.
  \end{array}\right. \end{equation}
\end{proposition}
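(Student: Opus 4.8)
The plan is to deduce everything from the Garvan--Kim--Stanton bijection $\phi$ of Theorem~\ref{GKS} by first understanding how conjugation $\lambda \mapsto \lambda^*$ acts on the vector $\phi(\lambda)$, and then restricting $\phi$ to the fixed-point locus of conjugation, which is precisely $SC_{(t)}$.

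First I would reinterpret the integer $n_i$ in a way that is symmetric under conjugation. Writing $B(\lambda) = \{\lambda_i - i : i \geq 1\}$ for the set of contents of the exposed boxes, the $t$-core condition is equivalent to the statement that on each runner $\{i + kt : k \in \mathbb{Z}\}$ the elements of $B(\lambda)$ are flush, i.e.\ occupy all positions below some top position $i + m_i t$ (removing a $t$-ribbon is exactly sliding one bead up its runner). Since the region of a box of content $c$ is $\lfloor c/t\rfloor + 1$ and the largest exposed content with label $i$ is this top position, a short check against the definition of the extended $t$-residue diagram gives $n_i = m_i + 1$; the normalization $\sum_i n_i = 0$ becomes $\sum_i m_i = -t$, which holds for $\emptyset$ and is preserved by $\phi$.

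The key step is the computation of the conjugation action. In the bi-infinite $0/1$ encoding of the boundary of $\lambda$ (equivalently, in terms of $B(\lambda)$), passing from $\lambda$ to $\lambda^*$ reflects and complements, namely $m \in B(\lambda^*)$ if and only if $-1-m \notin B(\lambda)$. Since the position $i + kt$ of runner $i$ is sent to runner $t-1-i$ under $m \mapsto -1-m$, tracking this correspondence runner by runner should yield the clean involution
\[
\phi(\lambda^*)_i = -\,\phi(\lambda)_{t-1-i}, \qquad 0 \le i \le t-1 .
\]
I regard this identity as the main obstacle: the GKS definition privileges rows (it uses the extended diagram with its extra column $0$ and declares a box exposed when it ends a row), so it is not manifestly symmetric under conjugation, and the passage to the reflect-and-complement description of $\lambda \mapsto \lambda^*$ must be done carefully, keeping track of the floors and of the shift by $1$.

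Granting this, the proof finishes by elementary bookkeeping. The self-conjugate $t$-cores correspond to the fixed vectors, i.e.\ those satisfying $n_{t-1-i} = -n_i$ for all $i$. For $t = 2k$ the indices split into the $k$ pairs $\{i, t-1-i\}$, so the vector is freely determined by $(n_0,\ldots,n_{k-1}) \in \mathbb{Z}^k$; for $t = 2k+1$ one gets in addition the forced middle value $n_k = 0$, and again $k$ free coordinates, so in both cases one lands in $\mathbb{Z}^{\lfloor t/2\rfloor}$ and $\sum_i n_i = 0$ is automatic. Substituting $n_{t-1-i} = -n_i$ into the weight formula of Theorem~\ref{GKS} collapses $\frac{t}{2}\sum_i n_i^2$ to $t\sum_{0\le i<\lfloor t/2\rfloor} n_i^2$ and $\sum_i i\,n_i$ to $\sum_{0\le i<\lfloor t/2\rfloor} (2i+1-t)\,n_i$; after the relabelling $i \mapsto \lfloor t/2\rfloor - 1 - i$ together with a global sign on the coordinates, which is exactly what defines $\phi_1$, the linear coefficients become $(1,3,\ldots,t-1)$ for $t$ even and $(2,4,\ldots,t-1)$ for $t$ odd, giving $|\lambda| = t\|{\bf n}\|^2 + {\bf c}\cdot{\bf n}$ as claimed. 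I would double-check the signs on the small cases $t=2$ and $t=3$.
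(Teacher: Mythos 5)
Your proposal is correct and takes essentially the route the paper intends but leaves to the citation of Garvan--Kim--Stanton: restrict the bijection $\phi$ of Theorem~\ref{GKS} to the fixed locus of conjugation, via the involution $\phi(\lambda^*)_i = -\phi(\lambda)_{t-1-i}$ (which indeed follows from the reflect-and-complement description $m \in B(\lambda^*) \Leftrightarrow -1-m \notin B(\lambda)$, giving $m_{t-1-i}(\lambda^*) = -m_i(\lambda)-2$, hence $n_{t-1-i}(\lambda^*) = -n_i(\lambda)$), and then compute the weight on fixed vectors. Note that your coordinates (negate and reverse the first $\lfloor t/2\rfloor$ components) coincide exactly with the paper's description of $\phi_1(\lambda)$ as the last $\lfloor t/2\rfloor$ components of $\phi(\lambda)$, so your bookkeeping, which checks out in both parities, is a correct filling-in of the proof the paper omits.
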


Moreover, the image of a self-conjugate $t$-core $\lambda$ under $\phi_1$ is the vector whose components are the $\lfloor t/2 \rfloor$  last ones of $\phi(\lambda)$. This proposition is essentially proved in \cite{GKS}.

For example, the self-conjugate $3$-core $\lambda$ of Figure~\ref{fig1} satisfies $\phi(\lambda)=(3,0,-3)$; therefore its image under $\phi_1$ is the vector $(-3)$.

\subsection{$t$-cores of doubled distinct partitions} We will also need a second subfamily of $\mathcal{P}_{(t)}$ in our proof of Theorem~\ref{theoremeintro}. Let $\mu^0 $ be a partition with distinct parts. We denote by S($\mu^0$) the shifted Ferrers diagram of $\mu^0$, which is its Ferrers diagram where for all $1\leq i \leq \ell(\mu^0)$, the $i^{th}$ row is shifted by $i$ to the right (see Figure~\ref{fig4} below). 
\begin{figure}[h!]\begin{center}
\includegraphics[scale=1]{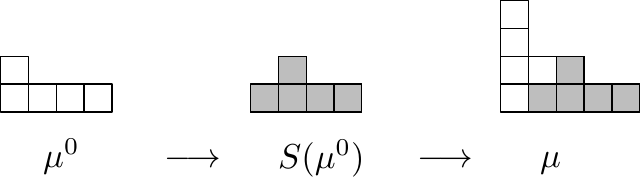}
\caption{\label{fig4}The construction of the doubled distinct partition $\mu $, for  $\mu^0=(4,1)$.}\end{center}
\end{figure}
\begin{definition}[\cite{GKS}] We define the \emph{doubled distinct partition} $\mu$ of $\mu^0$ as the partition whose Ferrers diagram is obtained by adding $\mu^0_i$ boxes to the  $i^{th}$ column of S($\mu^0$) for all $1\leq i \leq \ell(\mu^0)$. We denote by $DD$ the set of doubled distinct partitions and by $DD_{(t)}$ the subset of $t$-cores in $DD$. 
\end{definition}

\begin{proposition}\label{DD}There is a bijection $\phi_2$ between the partitions $\mu \in DD_{(t)}$ and vectors of integers $ \phi_2(\mu) :={\bf n}\in \mathbb{Z}^{\lfloor (t-1)/2 \rfloor},$ such that:  \begin{equation} |\mu | = t\|{\bf n}\|^2 + {\bf d}\cdot {\bf n} ,
\quad \mbox{with~} {\bf d}:=\left\lbrace\begin{array}{ll}
(2,4,\ldots,t-2)&\mbox{for $t$ even},\vspace*{0.05cm}
\\ (1,3,\ldots,t-2)&\mbox{for $t$ odd}.\end{array}\right.\end{equation}
\end{proposition}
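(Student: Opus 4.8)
The plan is to realize $DD_{(t)}$ as a symmetry class of the Garvan--Kim--Stanton vectors, exactly as is done for $SC_{(t)}$ in Proposition~\ref{autoconjugue}, and then to read the weight off Theorem~\ref{GKS}. First I would record how conjugation acts on $\phi$. Since the boxes $(i,j)$ and $(j,i)$ carry opposite labels modulo $t$ and this reflection reverses the regions of the extended $t$-residue diagram, one finds that $\phi(\lambda^{*})=(-n_{t-1},-n_{t-2},\ldots,-n_{0})$; that is, conjugation reverses and negates the vector. The self-conjugate $t$-cores are precisely the fixed points of this involution, namely the symmetry $n_{i}=-n_{t-1-i}$ behind Proposition~\ref{autoconjugue} (the reflection $i\mapsto t-1-i$ is centred at $(t-1)/2$).

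The family $DD_{(t)}$ is governed by the neighbouring involution. A doubled distinct partition is \emph{not} self-conjugate: its Frobenius symbol has the shape $(a_{1},\ldots,a_{r}\mid a_{1}-1,\ldots,a_{r}-1)$, so that each principal hook length is even, in contrast with the odd principal hooks of a self-conjugate partition. This one-box discrepancy along the diagonal shifts the centre of the reflection from $(t-1)/2$ to $t/2$, and I claim that $\mu\in DD_{(t)}$ if and only if $\phi(\mu)=(n_{0},\ldots,n_{t-1})$ satisfies $n_{i}=-n_{t-i}$ for $1\le i\le t-1$ (indices modulo $t$), which forces $n_{0}=0$ and, when $t$ is even, also $n_{t/2}=0$. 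Proving this equivalence --- converting the even-principal-hook condition, equivalently the doubling of $S(\mu^{0})$, into the residue-diagram symmetry $n_{i}=-n_{t-i}$ --- is the step I expect to be the main obstacle; it is the analogue for $DD_{(t)}$ of the self-conjugacy computation of~\cite{GKS}, and I would establish it either by a direct inspection of the extended $t$-residue diagram of a doubled distinct partition, or by composing conjugation with the unit cyclic shift of the residues, which moves the centre of symmetry from $(t-1)/2$ to $t/2$ in accordance with the passage from odd to even principal hooks.

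Granting the symmetry, the bijection and the formula are immediate. The reflection $i\mapsto t-i$ pairs the coordinates $n_{1},\ldots,n_{t-1}$ among themselves while fixing $n_{0}$ (and the middle coordinate when $t$ is even), so $\phi(\mu)$ is freely determined by its last $\lfloor (t-1)/2\rfloor$ coordinates; setting $\phi_{2}(\mu)$ equal to this truncation yields a bijection onto $\mathbb{Z}^{\lfloor (t-1)/2\rfloor}$ because $\phi$ is a bijection onto the sum-zero lattice by Theorem~\ref{GKS}. It then remains to substitute $n_{0}=0$ and $n_{t-i}=-n_{i}$ into $|\mu|=\frac{t}{2}\sum_{i}n_{i}^{2}+\sum_{i}i\,n_{i}$. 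The quadratic part collapses to $t\|{\bf n}\|^{2}$, while in the linear part each surviving coordinate $n_{j}$ (with $j>t/2$) acquires the coefficient $j-(t-j)=2j-t$; letting $j$ range over the free indices produces exactly ${\bf d}=(2,4,\ldots,t-2)$ for $t$ even and ${\bf d}=(1,3,\ldots,t-2)$ for $t$ odd. This final pairing is routine and entirely parallel to the computation of ${\bf c}$ in Proposition~\ref{autoconjugue}.
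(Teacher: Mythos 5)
Your proposal is correct and takes essentially the same route as the paper, which defines $\phi_2(\mu)$ to be the last $\lfloor (t-1)/2\rfloor$ coordinates of the Garvan--Kim--Stanton vector $\phi(\mu)$ and defers the underlying symmetry characterization to \cite{GKS}. The characterization you isolate ($n_i=-n_{t-i}$ for $1\le i\le t-1$, hence $n_0=0$ and $n_{t/2}=0$ for even $t$) is precisely what that reference establishes via the residue diagram (it checks out on the paper's example $\phi((5,3,1,1))=(0,2,-2)$), and your weight computation producing ${\bf d}$ is the routine part, so there is no gap beyond what the paper itself leaves to the citation.
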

Besides, the image of a doubled distinct $t$-core $\mu$ under $\phi_2$ is the vector whose components are the $\lfloor (t-1)/2 \rfloor$ last ones of $\phi(\mu)$. Again, Proposition~\ref{DD} is essentially proved in~\cite{GKS}.
~\\For example, the doubled distinct $3$-core  $\mu=(5,3,1,1)$ of Figure~\ref{fig4}, right,  satisfies $\phi(\mu)=(0,2,-2)$; so its image under $\phi_2$ is the vector $(-2)$.

\subsection{Generating function of $SC_{(t)} \times DD_{(t)}$}
We will now focus on pairs of $t$-cores in the set $SC_{(t)} \times DD_{(t)}$. We can in particular compute the generating function of these objects. Let  $(\lambda,\mu)$ be an element of $SC_{(t)} \times DD_{(t)}$. We define the weight of $(\lambda,\mu)$ as
$|\lambda|+|\mu|$, and we denote by $ h_t$ the generating function
\begin{equation}h_t(q) :=\sum_{(\lambda,\mu) \in SC_{(t)} \times DD_{(t)}} q^{|\lambda|+|\mu|}.\end{equation}
We would like to mention that the first step towards discovering Theorem~\ref{theoremeintro} was the computation of the Taylor expansion of $h_3(q)$, whose first terms seemed to coincide with the ones in the generating function of the vectors of integers involved in $\eqref{equaC}$ for $t=2$.
\begin{proposition}
The following equality holds for any integer $t\geq 0$:
\begin{equation}
h_{t+1}(q) =\frac{(q^2;q^2)_\infty}{(q;q)_\infty} (q^{t+1};q^{t+1})_\infty (q^{2t+2};q^{2t+2})^{t-1}_\infty, \quad \mbox{where~} (q;q)_\infty := \prod_{j \geq 1} (1-q^j).
\end{equation}
\end{proposition}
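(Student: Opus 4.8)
The plan is to exploit the additivity of the weight to factor $h_{t+1}$ as a product of two generating functions, to linearize each factor by means of the bijections of Propositions~\ref{autoconjugue} and~\ref{DD} applied with parameter $t+1$, and finally to recognise the outcome through the Jacobi triple product. Set $T:=t+1$. Since the weight of a pair $(\lambda,\mu)\in SC_{(T)}\times DD_{(T)}$ is $|\lambda|+|\mu|$, the defining sum factors as
\begin{equation}
h_{T}(q)=\Big(\sum_{\lambda\in SC_{(T)}}q^{|\lambda|}\Big)\Big(\sum_{\mu\in DD_{(T)}}q^{|\mu|}\Big).
\end{equation}
By Proposition~\ref{autoconjugue} the first sum equals $\sum_{\mathbf{n}\in\mathbb{Z}^{\lfloor T/2\rfloor}}q^{T\|\mathbf{n}\|^2+\mathbf{c}\cdot\mathbf{n}}$, and because the quadratic form $T\|\mathbf{n}\|^2$ is a sum of squares this splits into a product of one-variable sums $\sum_{n\in\mathbb{Z}}q^{Tn^2+cn}$, one for each entry $c$ of $\mathbf{c}$; Proposition~\ref{DD} does the same for the second factor using the entries of $\mathbf{d}$.

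Next I would observe that the entries of $\mathbf{c}$ and $\mathbf{d}$ together fill exactly $\{1,2,\ldots,T-1\}$, each value once. Indeed, for $T$ even the entries of $\mathbf{c}$ are the odd integers in $[1,T-1]$ and those of $\mathbf{d}$ the even ones, while for $T$ odd the roles are exchanged; in both cases the numbers of entries add up to $T-1$ and their union is $\{1,\ldots,T-1\}$. Consequently
\begin{equation}
h_T(q)=\prod_{c=1}^{T-1}S_c,\qquad S_c:=\sum_{n\in\mathbb{Z}}q^{Tn^2+cn}.
\end{equation}

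I would then evaluate each $S_c$ by the Jacobi triple product, obtaining
\begin{equation}
S_c=\prod_{m\geq 1}(1-q^{2Tm})\,(1+q^{T(2m-1)+c})\,(1+q^{T(2m-1)-c}),
\end{equation}
and multiply over $c=1,\ldots,T-1$. The factors $(1-q^{2Tm})$ at once give $(q^{2T};q^{2T})_\infty^{\,T-1}$. The remaining factors constitute the crux of the argument, namely the bookkeeping of the exponents $T(2m-1)\pm c$: for fixed $m$ the two families fill, respectively, the integers strictly between $2T(m-1)$ and $T(2m-1)$ and those strictly between $T(2m-1)$ and $2Tm$. The excluded midpoint $T(2m-1)$ is an odd multiple of $T$ and the excluded endpoints are even multiples of $T$, so as $m$ ranges over the positive integers these exponents run exactly once through every positive integer that is not a multiple of $T$. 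Hence the product of the $(1+\cdots)$ factors equals $\prod_{j\geq1}(1+q^j)\big/\prod_{m\geq1}(1+q^{Tm})$.

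Finally I would simplify with the classical identity $\prod_{j\geq1}(1+q^j)=(q^2;q^2)_\infty/(q;q)_\infty$ together with the same identity in base $q^{T}$, which yields
\begin{equation}
\frac{\prod_{j\geq1}(1+q^j)}{\prod_{m\geq1}(1+q^{Tm})}=\frac{(q^2;q^2)_\infty\,(q^{T};q^{T})_\infty}{(q;q)_\infty\,(q^{2T};q^{2T})_\infty}.
\end{equation}
Combining this with the factor $(q^{2T};q^{2T})_\infty^{\,T-1}$ leaves $(q^{2T};q^{2T})_\infty^{\,T-2}$, and substituting $T=t+1$ produces precisely the claimed expression. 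The only delicate point is the exponent bookkeeping in the penultimate multiplication; everything else is a direct application of the stated bijections and of the triple product.
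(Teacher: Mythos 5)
Your proof is correct. It is close in spirit to the paper's own proof (which is only sketched there), but it is organized differently and is more self-contained, so a comparison is worthwhile. The paper quotes the two known product formulas for the generating functions of $SC_{(t+1)}$ and $DD_{(t+1)}$ from Garvan--Kim--Stanton, multiplies them in each parity case, and only at the end invokes Sylvester's bijection (equivalently Euler's identity $\prod_{j\geq 1}(1+q^j)=(q^2;q^2)_\infty/(q;q)_\infty$) to unify the two resulting expressions into one. You instead go back to the bijections of Propositions~\ref{autoconjugue} and~\ref{DD} themselves, so that each factor becomes a product of one-dimensional theta sums $S_c=\sum_{n\in\mathbb{Z}} q^{(t+1)n^2+cn}$; your observation that the entries of $\mathbf{c}$ and $\mathbf{d}$ interleave to fill $\{1,\ldots,t\}$ exactly once merges everything into the single uniform product $\prod_{c=1}^{t}S_c$ \emph{before} any analytic work is done, so the parity distinction evaporates at the outset rather than at the end. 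From there your application of the Jacobi triple product and the exponent bookkeeping are both correct: for fixed $m$ the exponents $(t+1)(2m-1)\pm c$, $1\leq c\leq t$, sweep the integers strictly between $2(t+1)(m-1)$ and $2(t+1)m$ except the midpoint, so over all $m$ each positive integer not divisible by $t+1$ occurs exactly once; and Euler's identity (in base $q$ and in base $q^{t+1}$) plays for you exactly the role that Sylvester's bijection plays in the paper. What your route buys: no case analysis carried through the computation, and no need to quote the GKS product formulas, which you effectively re-derive. What the paper's route buys: brevity, if one takes those formulas as known.
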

\begin{proof} Both generating functions of $SC_{(t)}$ and $DD_{(t)}$ are already known (see~\cite{GKS}). The idea of the proof, that we will not detail, is to compute their product, which can be done by considering the parity of $t$. To conclude, it remains to use Sylvester's bijection between partitions with odd parts and partitions with distinct parts in order to simplify and unify the resulting expressions.\end{proof}

\section{A Nekrasov-Okounkov type formula in type $\widetilde{C}$}\label{section2} 
The goal of this section is to sketch the proof of Theorem~\ref{theoremeintro}.

The global strategy is the following: we start from Macdonald's formula \eqref{equaC} in type $\widetilde{C}_t$, in which we replace the sum over vectors of integers by a sum over pairs of $t+1$-cores, the first in $SC_{(t+1)}$, and the second in $DD_{(t+1)}$. To do this, we need a new bijection $\varphi$ satisfying some properties that we will explain. This will allow us to establish Theorem~\ref{thmprincipal} of Section~\ref{3.2} below for all integers $t \geq 2$. An argument of polynomiality will then enable us to extend this theorem to any complex number $t$. Then, a natural bijection between pairs $(\lambda,\mu)$ in $SC \times DD$, and doubled distinct partitions (with weight equal to $|\lambda|+|\mu|$) will allow us to conclude. Note that at this final step, the partitions need not be $t+1$-cores.

\subsection{The bijection $\varphi$}
In what follows, we assume that $t\geq 2$ is an integer.
\begin{definition}\label{deltai} If  $(\lambda, \mu)$ is a pair belonging to $SC_{(t+1)} \times DD_{(t+1)}$, we denote by $\Delta$ the \emph{set of principal hook lengths} of $\lambda$ and $\mu$, and for all $ i \in \{1,\ldots, t\}$ we define
\begin{equation}\Delta_i:= Max\left(\{ h \in \Delta , h \equiv \pm i -t-1 \mbox{~mod~}2t+2 \} \cup \{i-t-1\}\right).\end{equation}
\end{definition}
For example, for $\lambda= (7,5,3,2,2,1,1)$, $\mu = (5,3,1,1)$ and $t+1=3$, we have $\Delta=\{13,8,7,2,1\}$, $\Delta_1=8$, and $
\Delta_2 =13$ (see Figure~\ref{fig5}).
\begin{figure}[h!]\begin{center}
\includegraphics[scale=1.2]{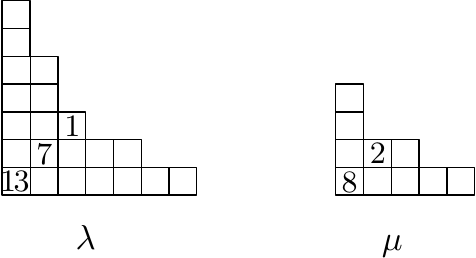}
\caption{\label{fig5}Computation of $\Delta$, $\Delta_1$ and $\Delta_2$ for a $(\lambda, \mu) \in SC_{(3)} \times DD_{(3)}$.}\end{center}
\end{figure}

As $\lambda$ (\emph{resp.} $\mu$) is self-conjugate (\emph{resp.} doubled distinct), all of its principal hook lengths are odd (\emph{resp.} even).  The knowledge of the set $\Delta$ enables us to reconstruct uniquely both partitions $\lambda$ and $\mu$. The following theorem shows that in fact, when these two partitions are $t+1$-cores, it is enough to know the $\Delta_i$'s to recover $\lambda$ and $\mu$ (so knowing the hook length maxima in each congruency class modulo $2t+2$ is enough).
\begin{theorem}\label{phi} Set ${\bf e}:=(1,2,\ldots, t)$. 
There is a bijection $\varphi$ between $SC_{(t+1)} \times DD_{(t+1)}$ and $ \mathbb{Z}^t$ such that $\varphi(\lambda,\mu):={\bf n}=(n_1,\ldots, n_t)$ satisfies:
\begin{equation} |\lambda|+|\mu|=(t+1) \|{\bf n}\| ^2 +{\bf e} \cdot {\bf n}= (t+1) \sum_{i=1}^t \left(n_i^2+in_i\right).\end{equation}
Besides, the following relation holds for all integers $i \in \{1,\ldots, t\}$: 
\begin{equation}\label{liennideltai}t+1+\Delta_i= \sigma_i ((2t+2)n_i+i),\end{equation}
where $\sigma_i$ is equal to $1 $ (\emph{resp.} $-1$) if $n_i \geq 0$ (\emph{resp.} $n_i<0$).
\end{theorem}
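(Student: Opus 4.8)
The plan is to construct $\varphi$ directly from the two bijections $\phi_1$ and $\phi_2$ of Propositions~\ref{autoconjugue} and~\ref{DD}, used with parameter $t+1$. Since $\phi_1$ identifies $SC_{(t+1)}$ with $\mathbb{Z}^{\lfloor (t+1)/2\rfloor}$ and $\phi_2$ identifies $DD_{(t+1)}$ with $\mathbb{Z}^{\lfloor t/2\rfloor}$, and since $\lfloor (t+1)/2\rfloor+\lfloor t/2\rfloor=t$, the pair $\bigl(\phi_1(\lambda),\phi_2(\mu)\bigr)$ carries exactly $t$ integer coordinates. I would set $\varphi(\lambda,\mu)={\bf n}\in\mathbb{Z}^t$ by interleaving these coordinates according to the parity of the index: the coordinates of $\phi_1(\lambda)$ are placed, in increasing order, at the indices $i$ with $i\equiv t \pmod 2$, and those of $\phi_2(\mu)$ at the remaining indices. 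As $\phi_1$ and $\phi_2$ are bijections, so is $\varphi$; the only freedom is the interleaving rule, which is then forced by the weight computation.

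For the weight formula I would simply add the identities of Propositions~\ref{autoconjugue} and~\ref{DD} (with parameter $t+1$): the quadratic parts sum to $(t+1)\|{\bf n}\|^2$ regardless of the interleaving. For the linear part, the key remark is that the vectors ${\bf c}$ and ${\bf d}$ consist of integers of opposite parities whose union is exactly $\{1,2,\dots,t\}$. Indeed, when $t+1$ is even one has ${\bf c}=(1,3,\dots,t)$ and ${\bf d}=(2,4,\dots,t-1)$, and when $t+1$ is odd one has ${\bf c}=(2,4,\dots,t)$ and ${\bf d}=(1,3,\dots,t-1)$. With the parity-based interleaving above, the $j$-th coordinate of $\phi_1(\lambda)$ (resp. $\phi_2(\mu)$) lands at the index $i$ whose value equals the $j$-th entry of ${\bf c}$ (resp. ${\bf d}$), so the combined linear coefficient of $n_i$ becomes exactly $i$. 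Hence the linear term is ${\bf e}\cdot{\bf n}$ with ${\bf e}=(1,\dots,t)$, giving $|\lambda|+|\mu|=(t+1)\|{\bf n}\|^2+{\bf e}\cdot{\bf n}$.

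The heart of the matter, which I expect to be the main obstacle, is the relation~\eqref{liennideltai}. Here I would return to the description of $\phi$ (Theorem~\ref{GKS}) through the extended $(t+1)$-residue diagram. Writing $N=t+1$, each component $n_l$ of $\phi$ equals the largest region index $\lfloor c/N\rfloor+1$ over the exposed boxes of residue $l$, where the contents of exposed boxes are precisely the values $\{\lambda_k-k:k\ge 1\}$ (the rightmost box of each row, column $0$ supplying the rows below the diagram); one checks this already yields $\sum_l n_l=0$. The task is to translate this into principal hook lengths. For a self-conjugate partition the principal hooks are the odd numbers $2(\lambda_k-k)+1$ attached to the non-negative exposed contents ($k\le D(\lambda)$), while for a doubled distinct partition they are the even numbers $2(\mu_k-k)$ attached to the positive exposed contents. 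Thus a principal hook $h$ and its exposed content $c$ satisfy $h=2c+1$, resp. $h=2c$, and the congruence $h\equiv\pm i-(t+1)\pmod{2t+2}$ becomes a congruence on $c$ modulo $N$; matching this with the residue indexing $n_i$ and rewriting $\lfloor c/N\rfloor$ in terms of $c$ produces $t+1+\Delta_i=\sigma_i\bigl((2t+2)n_i+i\bigr)$.

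What makes this delicate is the sign $\sigma_i$ and the default element $\{i-t-1\}$ in Definition~\ref{deltai}. I would control them using the Maya-diagram symmetry special to each family: the content set of a self-conjugate core is symmetric under $c\mapsto -1-c$, and that of a doubled distinct core under $c\mapsto -c$ with a single fixed hole at $0$. These symmetries take the form $n_l=-n_{N-1-l}$ on components, so that the last $\lfloor N/2\rfloor$ (resp. $\lfloor (N-1)/2\rfloor$) components recorded by $\phi_1$ (resp. $\phi_2$) encode, up to sign, the regions of the complementary residues where the genuine principal hooks live. The sign $\sigma_i$ then registers on which side of the diagonal the extremal exposed box lies: $n_i\ge 0$ exactly when a principal hook of the prescribed residue class is present, while $n_i=0$ is the degenerate case where the maximum defining $\Delta_i$ is attained at the default element, i.e.\ $\Delta_i=i-t-1$. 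Bookkeeping these two symmetries together with the parity interleaving fixed above is the technical crux; once it is in place, both assertions follow simultaneously, and the instance $\lambda=(7,5,3,2,2,1,1)$, $\mu=(5,3,1,1)$ with $\varphi(\lambda,\mu)=(-2,-3)$ gives a convenient consistency check.
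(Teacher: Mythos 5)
Your construction of $\varphi$, the bijectivity argument, and the weight formula are correct and are exactly the paper's: one interleaves the components of $\phi_1(\lambda)$ and $\phi_2(\mu)$ (with parameter $t+1$) according to the parity of the index, and since the entries of ${\bf c}$ and ${\bf d}$ have complementary parities with union $\{1,2,\ldots,t\}$, adding the identities of Propositions~\ref{autoconjugue} and~\ref{DD} gives $|\lambda|+|\mu|=(t+1)\|{\bf n}\|^2+{\bf e}\cdot{\bf n}$. This is precisely what the paper proves, and no more.

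The genuine gap is \eqref{liennideltai}. The paper itself states that this is ``the key property, which is hard to prove'' and omits the proof; your proposal does not prove it either. Your sketch assembles the right ingredients (exposed contents $\lambda_k-k$, region indices, the Maya-diagram symmetries of self-conjugate and doubled distinct cores), but the actual derivation --- converting the maximal region index of each residue class modulo $t+1$ into the maximal principal hook length of each class modulo $2t+2$, with the correct sign $\sigma_i$ and the correct default value $i-t-1$ --- is exactly the ``bookkeeping'' you defer, and that bookkeeping \emph{is} the content of the statement. Two concrete points show the crux is not under control. First, the symmetry you quote, $n_l=-n_{N-1-l}$, is the self-conjugate one; for a doubled distinct core the correct statement is $n_0=0$ and $n_l=-n_{N-l}$ for $l\geq 1$, and since the whole argument is index bookkeeping, this discrepancy cannot be glossed over. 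Second, your sign analysis is internally inconsistent: you assert that $n_i\geq 0$ holds exactly when a principal hook of the prescribed class is present, and simultaneously that $n_i=0$ is the degenerate case $\Delta_i=i-t-1$ with no such hook. What \eqref{liennideltai} actually encodes is that $n_i>0$ corresponds to a maximal principal hook $\equiv i-(t+1) \pmod{2t+2}$, $n_i<0$ to a maximal principal hook $\equiv -i-(t+1) \pmod{2t+2}$ (a hook is present in both cases, on opposite sides of the congruence dichotomy), and only $n_i=0$ to the absence of any hook in either class. Until this trichotomy is established from the residue-diagram description, the theorem remains unproven.
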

We can be more explicit about the construction of $\varphi$. Recall the bijections $\phi_1$ and $\phi_2$ defined in Propositions~\ref{autoconjugue} and \ref{DD}. If $(t+1)$ is odd, then $n_{2i}$ (\emph{resp. } $n_{2i+1}$) is the $i^{th}$ component of $\phi_1(\lambda)$ (\emph{resp.} $\phi_2(\mu)$); and if $(t+1)$ is even, $n_{2i}$ (\emph{resp. } $n_{2i+1}$) is the $i^{th}$component of $\phi_2(\mu)$ (\emph{resp.} $\phi_1(\lambda)$) .

It is then easy to prove that $\varphi$ is a bijection. The key property, which is hard to prove, is the one expressed in \eqref{liennideltai}; we do not give the proof here.

For example, the pair of $3$-cores  ($\lambda,\mu$) of Figure~\ref{fig5} satisfies $\varphi(\lambda,\mu)=(-2,-3)$. We have $31 =  |\lambda|+|\mu| =3(4+9)+1(-2)+2(-3)$.
Moreover, $\Delta_1=8$, $
\Delta_2 =13$. We verify that $3+\Delta_1=11=-\left(6n_1+1\right)$, and
$3+\Delta_2=16=-\left(6n_2+2 \right).$
\medskip
\\The inverse of $\varphi$ can be recursively described as follows. Fix a vector ${\bf n}=(n_1,\ldots, n_t)$ in $\mathbb{Z}^t$, then
 \begin{itemize}\itemsep-4pt
\item if all the $n_i$'s are equal to zero, then $\lambda$ and $\mu$ are empty,
\item if a $n_i$ is equal to $1$, then the corresponding partition ($\lambda$ or $\mu$, depending on the parity of $i$) contains a principal hook of length $t+1+i$,
\item if a $n_i$ is equal to $-1$, then the corresponding partition contains a principal hook of length $t+1-i$,
\item the preimage of $(n_1,\ldots,n_i +1, \ldots, n_t)$ if $n_i>0$ (\emph{resp.} $(n_1,\ldots,n_i -1, \ldots, n_t)$ if $n_i <0$) is the preimage of $(n_1,\ldots,n_i, \ldots, n_t)$ in which we add in the corresponding partition a principal hook of length $(t+1)(2n_i-1)+i$ (\emph{resp.} $(t+1)(-2n_i-1)-i$).
\end{itemize}
\begin{remark}\label{remarquedelta} There are three immediate consequences of the previous recursive description of $\varphi^{-1}$.
\\(i) There can not be in  $\Delta$ both a principal hook length equal to  $i+t+1$ mod $2t+2$ and a principal hook length equal to $-i+t+1$ mod $2t+2$.
\\(ii) If $h>2t+2$ belongs to  $\Delta$, then $h-2t-2$ also belongs to $\Delta$.
\\(iii) If a finite subset of $\mathbb{N}$ verifies the two former properties (i) and (ii) and does not contain any element equal to zero modulo $2t+2$, then it is the set $\Delta$ of a pair of $(t+1)$-cores $(\lambda,\mu)\in DD_{(t+1)} \times SC_{(t+1)}$.
\end{remark}
By using our bijection $\varphi$, and by setting $v_i = (2t+2)n_i +i$ for $1 \leq i \leq t$, we can replace the sum in Macdonald's formula~\eqref{equaC} by a sum over pairs $(\lambda,\mu) \in DD_{(t+1)} \times SC_{(t+1)}$ (and not over vectors of integers). Therefore \eqref{equaC} takes the form (recall that $\sigma_i$ is equal to $1 $ (\emph{resp.} $-1$) if $n_i \geq 0$ (\emph{resp.} $n_i<0$)):
\begin{eqnarray}\label{eqcoeff2} \displaystyle\prod\limits_{k \geq 1}(1-x^k)^{2t^2+t} & 
= \displaystyle c_1 \sum\limits_{\lambda, \mu} x^{|\lambda| +|\mu|}\prod\limits_i (2t+2n_i +i)\prod\limits_{i<j}((2t+2n_i +i)^ 2-(2t+2n_j +j)^2) 
\\ &=\displaystyle c_1 \sum\limits_{\lambda, \mu} x^{|\lambda| +|\mu|}\prod_i \sigma_i(t+1+\Delta_i)\prod_{i<j}((t+1+\Delta_i)^2-(t+1+\Delta_j)^2).\label{eqcoeff}\end{eqnarray}

\subsection{Simplification of coefficients}\label{3.2}
The next step towards proving Theorem~\ref{theoremeintro} is a simplication of both products on the right-hand side of \eqref{eqcoeff}, in such a way that they do not depend on the $\Delta_i$'s (and more generally, that they do not depend on congruency classes modulo $2t+2$). To do that, we need the following notion defined in \cite{HAN}, but only for odd integers.
 \begin{definition}A finite set of integers $A$ is a $2t+2$-\emph{compact set} if and only if the following conditions hold:
\begin{itemize}\itemsep-4pt
\item[(i)] $-1,-2, \ldots, -2t-1$ belong to $A$;
\item[(ii)] for all $a \in A$ such that $a \neq -1,-2, \ldots, -2t-1$, we have $a \geq 1$ and $a \not\equiv 0~\mbox{mod~} 2t+2$;
\item[(iii)] let $b>a \geq 1$ be two integers such that $a \equiv b$ mod $2t+2$. If $b \in A$, then $a \in A$.
\end{itemize}
\end{definition}
Let A be a $2t+2$-compact set. An element $a\in A$ is $2t+2$-maximal if for any element $b>a$ such that $a\equiv b $ mod $2t+2$, $b \notin A$ ($i.e.$ a is maximal in its congruency class modulo $2t+2$). The set of $2t+2$-maximal elements is denoted by $max_{2t+2}(A)$. It is clear by definition of compact sets that $A$ is uniquely determined by $max_{2t+2}(A)$. We can show the following lemma, whose proof is analogous to the one of \cite{HAN}, but in the even case.
\begin{lemma}\label{lemme han} For any $2t+2$-compact set A, we have:
\begin{equation}
-\prod_{a\in A, a>0}\left(1-\left(\frac{2t+2}{a} \right)^2\right) = \prod_{a \in max_{2t+2}(A)} \frac{a+2t+2}{a}.
\end{equation}
\end{lemma}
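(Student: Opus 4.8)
The plan is to reduce the identity to a purely multiplicative statement about the residue classes modulo $2t+2$ and then verify it class by class. First I would recast the left-hand side using the factorization $1-(\tfrac{2t+2}{a})^2 = \tfrac{(a-2t-2)(a+2t+2)}{a^2}$, valid for each positive $a\in A$. This rewrites the product over positive elements as
\begin{equation}
-\prod_{a\in A,\, a>0}\frac{(a-2t-2)(a+2t+2)}{a^2}.
\end{equation}
The key observation is that the factors $a+2t+2$ and $a-2t-2$ can be matched against other elements of $A$ lying in the \emph{same} congruency class: by condition (iii), if $a>2t+2$ belongs to $A$ then so does $a-2t-2$, and by (ii) no positive element is $\equiv 0$, so the classes partition the positive part of $A$ cleanly.

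Next I would fix a congruency class $c$ modulo $2t+2$ (with $c\not\equiv 0$) and collect all elements of $A$ in that class. Because $A$ is a $2t+2$-compact set, the elements of a fixed class form an ``interval'' in the arithmetic progression: they are exactly $c, c+(2t+2), \ldots, a_{\max}$ where $a_{\max}\in max_{2t+2}(A)$ is the maximal element of the class (this uses (iii) to ensure no gaps, plus the base elements forced by (i)). Within such a telescoping block the factors $\prod (a+2t+2)$ and $\prod(a-2t-2)$ almost cancel against the $\prod a^2$ in the denominator: writing the class elements as an arithmetic progression, the product $\prod_a (a-2t-2)(a+2t+2)/a^2$ telescopes, leaving only the boundary terms — the smallest element's $(a-2t-2)$ factor and the largest element's $(a+2t+2)$ factor, against one surviving copy of the endpoints in the denominator. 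The net contribution of the class should collapse to $\tfrac{a_{\max}+2t+2}{a_{\max}}$, matching exactly the corresponding factor on the right-hand side, once the negative ``base'' elements $-1,\ldots,-2t-1$ from (i) are correctly folded into the telescoping to supply the smallest-level cancellations and account for the global sign.

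The main obstacle, and the step requiring genuine care, is controlling the boundary behavior of the telescoping together with the overall sign. The elements $-1,-2,\ldots,-2t-1$ guaranteed by (i) sit at the bottom of $2t+1$ of the residue classes (those not $\equiv 0 \bmod 2t+2$), and their factors $a-2t-2$ are negative; tracking how these negative factors combine across all classes is exactly what produces the leading $-1$ on the left-hand side and ensures each class reduces to $\tfrac{a_{\max}+2t+2}{a_{\max}}$ rather than its reciprocal or negative. I would handle this by treating the class of $-(2t+2)/2 = -(t+1)$-type boundary cases (if any parity subtlety arises in the even setting) separately, and by bookkeeping the count of negative boundary factors to confirm the sign. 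Since the statement is explicitly the even-modulus analogue of the lemma in \cite{HAN}, I expect the telescoping and sign argument to mirror that proof with the single modification that the modulus $2t+2$ is even and $a_{\max}$ ranges over $max_{2t+2}(A)$; the verification that no element is divisible by $2t+2$ (condition (ii)) is what prevents any denominator factor $a$ from coinciding with a shifted numerator and thereby guarantees the clean telescoping.
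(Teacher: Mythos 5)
Your overall strategy---factoring each term as $1-\left(\frac{2t+2}{a}\right)^2=\frac{(a-2t-2)(a+2t+2)}{a^2}$, grouping the positive elements of $A$ into residue classes modulo $2t+2$ (which by (ii) and (iii) are initial segments $c,\,c+(2t+2),\ldots,a_{\max}$ of arithmetic progressions), and telescoping within each class---is sound, and it is precisely the even-modulus analogue of Han's argument that the paper invokes without writing out. However, one of your intermediate claims is not literally true, and the step you defer to ``bookkeeping'' is exactly where the lemma lives. Writing $s:=2t+2$, the telescoping in a class $c\in\{1,\ldots,2t+1\}$ containing $m_c\geq 1$ positive elements leaves
\begin{equation*}
\frac{(c-s)\,(a_{\max}+s)}{c\; a_{\max}},
\end{equation*}
not $\frac{a_{\max}+s}{a_{\max}}$: the factor $\frac{c-s}{c}$ survives, and no per-class ``folding in'' of the negative base elements can remove it, because the cancellation is global rather than class-by-class.

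To close the argument you must also account for the classes containing no positive element: they contribute nothing to the left-hand side, but their maximal element is the negative base element $c-s=-(2t+2-c)$, so they contribute $\frac{(c-s)+s}{c-s}=\frac{c}{c-s}$ to the right-hand side. The lemma is then equivalent to
\begin{equation*}
-\prod_{c\,:\,m_c\geq 1}\frac{c-s}{c}\;=\;\prod_{c\,:\,m_c=0}\frac{c}{c-s},
\qquad\text{i.e.}\qquad
-\prod_{c=1}^{2t+1}\frac{c-s}{c}\;=\;1,
\end{equation*}
and this holds because $\prod_{c=1}^{2t+1}\frac{c-s}{c}=(-1)^{2t+1}\frac{(2t+1)!}{(2t+1)!}=-1$: the base elements $-1,\ldots,-(2t+1)$ occupy all $2t+1$ nonzero residue classes, an \emph{odd} number of them. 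This one computation simultaneously produces the minus sign on the left-hand side and is the only point where the even case differs from Han's odd case (there the number of nonzero classes is even, the analogous product is $+1$, and no sign appears). Note also that your proposed separate treatment of the class of $t+1$ is unnecessary; it behaves exactly like every other class in both the telescoping and the final product. With these corrections your plan does yield a complete proof, so the gap is one of execution rather than of approach.
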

Now the strategy is to do an induction on the number of principal hooks of the pairs $(\lambda, \mu$) appearing in \eqref{eqcoeff}. The two following lemmas are the first step; their proofs are omitted due to their technical complexities and lengths.

Let $(\lambda, \mu)$ be in $SC_{(t+1)} \times DD_{(t+1)}$ with $\lambda$ or $\mu$ non empty,  and let $\Delta$ be the set of principal hook lengths of $\lambda$ and $\mu$, from which we can define the $\Delta_i$'s as in Definition~\ref{deltai}. We denote by $h_{11}$ the maximal element of $\Delta$. We denote by $(\lambda', \mu') \in SC_{(t+1)} \times DD_{(t+1)}$ the pair  obtained by deleting the principal hook of length $h_{11}$. We denote by $\Delta'$ the set of principal hook lengths of $\lambda'$ and $\mu'$, and consider its associated $\Delta_i'$'s. 
\begin{lemma}\label{lemme 1}If $i_0$ is the (unique) integer such that $\Delta_{i_0} = h_{11}$, then we have:
\begin{multline}\label{eqlemme 1}
\prod_i \frac{\sigma_i(t+1+\Delta_i)}{\sigma_i'(t+1+\Delta_i')}\prod_{i<j}\frac{(t+1+\Delta_i)^ 2-(t+1+\Delta_j)^2}{(t+1+\Delta_i')^ 2-(t+1+\Delta_j')^2}= \left(1-\frac{2t+2}{h_{11}}\right)\left(1-\frac{t+1}{h_{11}}\right)\\\times \left(\frac{h_{11}+t+1}{h_{11}-t-1}\right)\left(\frac{h_{11}}{h_{11}-2t-2}\right)\left(\frac{2h_{11}}{2h_{11}-2t-2}\right) \prod_{j \neq i_0} \frac{(h_{11}+ \Delta_j +2t+2)(h_{11}-\Delta_j)}{(h_{11}+ \Delta_j )(h_{11}-\Delta_j -2t-2)}.
\end{multline}
\end{lemma}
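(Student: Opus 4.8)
The plan is to prove Lemma~\ref{lemme 1} by a direct computation that tracks exactly how the $\Delta_i$'s change when the single principal hook of maximal length $h_{11}$ is removed. The starting observation is that deleting this hook changes only one coordinate of the vector $\varphi(\lambda,\mu)$, namely the $i_0$-th one, where $i_0$ is characterized by $\Delta_{i_0}=h_{11}$. Indeed, by the recursive description of $\varphi^{-1}$ and by relation~\eqref{liennideltai}, the principal hook lengths congruent to $\pm i_0 -t-1$ modulo $2t+2$ are precisely the ones governed by the coordinate $n_{i_0}$, so removing the largest of them decreases $|n_{i_0}|$ by $1$ while leaving $n_j$ (hence $\Delta_j$ and $\sigma_j$) unchanged for all $j\neq i_0$. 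Thus in the big ratio on the left-hand side of~\eqref{eqlemme 1}, every factor indexed by a pair $i<j$ with $i,j\neq i_0$, and every single factor with index $i\neq i_0$, cancels identically, and only the terms involving the index $i_0$ survive.

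Next I would compute the new value $\Delta_{i_0}'$ explicitly. Using~\eqref{liennideltai} in the form $t+1+\Delta_{i_0}=\sigma_{i_0}((2t+2)n_{i_0}+i_0)=\sigma_{i_0}((2t+2)\sigma_{i_0}|n_{i_0}|+i_0)$, one gets $h_{11}=t+1+\Delta_{i_0}=(2t+2)|n_{i_0}|+\sigma_{i_0}i_0$ when $n_{i_0}>0$ and the corresponding expression when $n_{i_0}<0$; after decreasing $|n_{i_0}|$ by one, the new maximal representative in that class satisfies $t+1+\Delta_{i_0}'=h_{11}-(2t+2)$, i.e. $\Delta_{i_0}'=\Delta_{i_0}-2t-2=h_{11}-3t-3$. (When $|n_{i_0}|$ drops to $0$ the sign $\sigma_{i_0}$ flips and $\Delta_{i_0}'=i_0-t-1$, which one checks is the same formula $\Delta_{i_0}'=h_{11}-2t-2$ evaluated at the boundary, so no separate case is truly needed.) Substituting $\Delta_{i_0}=h_{11}-t-1$ and $\Delta_{i_0}'=h_{11}-3t-3$ into the surviving factor
\[
\frac{\sigma_{i_0}(t+1+\Delta_{i_0})}{\sigma_{i_0}'(t+1+\Delta_{i_0}')}\prod_{j\neq i_0}\frac{(t+1+\Delta_{i_0})^2-(t+1+\Delta_j)^2}{(t+1+\Delta_{i_0}')^2-(t+1+\Delta_j)^2}
\]
and writing each difference of squares as a product $(t+1+\Delta_{i_0}-t-1-\Delta_j)(t+1+\Delta_{i_0}+t+1+\Delta_j)$, the whole thing becomes an explicit rational function of $h_{11}$ and the $\Delta_j$'s.

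The remaining work is purely algebraic bookkeeping, and this is where the main obstacle lies: one must reorganize the single-term and product-term contributions so that they assemble into the five prefactors
\[
\left(1-\tfrac{2t+2}{h_{11}}\right)\left(1-\tfrac{t+1}{h_{11}}\right)\left(\tfrac{h_{11}+t+1}{h_{11}-t-1}\right)\left(\tfrac{h_{11}}{h_{11}-2t-2}\right)\left(\tfrac{2h_{11}}{2h_{11}-2t-2}\right)
\]
times the asserted product over $j\neq i_0$. The sign ratio $\sigma_{i_0}/\sigma_{i_0}'$ and the scalar factor $(t+1+\Delta_{i_0})/(t+1+\Delta_{i_0}')=h_{11}/(h_{11}-2t-2)$ already supply the fourth prefactor; the factor $h_{11}/(h_{11}-t-1)\cdot(h_{11}-t-1)/(h_{11}-2t-2)$ hidden in the $j=i_0$-type numerology must be separated carefully from the $j\neq i_0$ product. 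The delicate point is that the denominators $t+1+\Delta_{i_0}'\pm(t+1+\Delta_j)$ differ from the numerators $t+1+\Delta_{i_0}\pm(t+1+\Delta_j)$ by a shift of exactly $\pm(2t+2)$, which after rewriting $t+1+\Delta_j$ in the two combinations produces exactly the factors $(h_{11}+\Delta_j+2t+2)(h_{11}-\Delta_j)$ over $(h_{11}+\Delta_j)(h_{11}-\Delta_j-2t-2)$ claimed in the product.

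The hard part will be neither step individually but rather matching the ``leftover'' boundary factors — the first, second, third, and fifth prefactors, which encode the interaction of $h_{11}$ with the diagonal structure ($t+1$ versus $2t+2$, and the doubling factor $2h_{11}/(2h_{11}-2t-2)$ coming from the symplectic pairing $v_i\leftrightarrow -v_i$) — without double-counting. I would handle this by isolating, within the $j\neq i_0$ range, the ``phantom'' index corresponding to the reflection $-\Delta_{i_0}$ of the hook being removed (present because the type $\widetilde C$ product $\prod_i v_i\prod_{i<j}(v_i^2-v_j^2)$ is invariant under $v\mapsto -v$), and showing that its contribution collapses precisely to the extra single factor $1-(t+1)/h_{11}$ together with $2h_{11}/(2h_{11}-2t-2)$. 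This is exactly the even-case analogue of the corresponding simplification in~\cite{HAN}, and I expect it to be verifiable by the same careful telescoping, which is why the paper chooses to omit the full computation.
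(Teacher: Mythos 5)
Your high-level plan is the natural one, and its first step is correct: deleting the principal hook of length $h_{11}$ decreases $|n_{i_0}|$ by one and leaves $n_j$, hence $\Delta_j$ and $\sigma_j$, unchanged for every $j\neq i_0$, so all factors of the left-hand side of \eqref{eqlemme 1} not involving the index $i_0$ cancel. The genuine error comes immediately after, and it derails the rest: the lemma's hypothesis is $\Delta_{i_0}=h_{11}$, but you set $t+1+\Delta_{i_0}=h_{11}$, i.e.\ $\Delta_{i_0}=h_{11}-t-1$. Since $h_{11}$ is itself a principal hook length (the maximal element of $\Delta$), your identification contradicts the very statement you are proving. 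The correct values are $t+1+\Delta_{i_0}=t+1+h_{11}=\sigma_{i_0}((2t+2)n_{i_0}+i_0)$ and, in every case, $\sigma_{i_0}'(t+1+\Delta_{i_0}')=\sigma_{i_0}(h_{11}-t-1)$, hence $(t+1+\Delta_{i_0}')^2=(h_{11}-t-1)^2$. (Your boundary discussion is also off: when $n_{i_0}=-1$ drops to $0$ one gets $\Delta_{i_0}'=-h_{11}$, not $h_{11}-2t-2$, but $\sigma_{i_0}'$ flips simultaneously, so the signed quantity and its square are as in the generic case; from the positive side the sign does not flip at all.) With the correct values, the surviving single ratio is $\frac{\sigma_{i_0}(t+1+\Delta_{i_0})}{\sigma_{i_0}'(t+1+\Delta_{i_0}')}=\frac{h_{11}+t+1}{h_{11}-t-1}$, which is the \emph{third} prefactor, not the fourth as you assert, and the differences of squares factor as $(h_{11}+t+1)^2-(t+1+\Delta_j)^2=(h_{11}-\Delta_j)(h_{11}+\Delta_j+2t+2)$ and $(h_{11}-t-1)^2-(t+1+\Delta_j)^2=(h_{11}-\Delta_j-2t-2)(h_{11}+\Delta_j)$, exactly the claimed factors. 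With your substitution $\Delta_{i_0}=h_{11}-t-1$, $\Delta_{i_0}'=h_{11}-3t-3$, they instead factor as $(h_{11}-t-1-\Delta_j)(h_{11}+t+1+\Delta_j)$, etc., which do not match the right-hand side.

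The second gap is the final assembly, where you invoke a ``phantom'' reflected index inside the $j\neq i_0$ product to generate the remaining prefactors. No such mechanism is needed, and none exists: on the right-hand side of \eqref{eqlemme 1} the first and fourth prefactors multiply to $1$, as do the second and fifth, since $\left(1-\frac{2t+2}{h_{11}}\right)\frac{h_{11}}{h_{11}-2t-2}=1$ and $\left(1-\frac{t+1}{h_{11}}\right)\frac{2h_{11}}{2h_{11}-2t-2}=1$. The right-hand side is therefore identically $\frac{h_{11}+t+1}{h_{11}-t-1}\prod_{j\neq i_0}\frac{(h_{11}+\Delta_j+2t+2)(h_{11}-\Delta_j)}{(h_{11}+\Delta_j)(h_{11}-\Delta_j-2t-2)}$, which is precisely what the left-hand side reduces to after the cancellations of your first step; the redundant five-factor form is there only so that, in the induction of Lemma~\ref{lemme-recap}, everything except $\left(1-\frac{2t+2}{h_{11}}\right)\left(1-\frac{t+1}{h_{11}}\right)$ can be recognized via Lemmas~\ref{lemme han} and~\ref{lemme 2} as the $max_{2t+2}$-product of a compact set. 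A proof written along your ``symplectic pairing'' lines would not close; once the identification of $\Delta_{i_0}$ is corrected, however, your outline collapses to a short, complete verification.
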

\begin{lemma}\label{lemme 2}
With the same notations as above, we define the set \begin{equation}E:=\bigcup \limits _{j \neq i_o} \{h_{11}+ \Delta_j, h_{11}-\Delta_j -2t-2\} \cup\{h_{11}-t-1, h_{11}-2t+2, 2h_{11}-2t+2\}. \end{equation} Then E is the $max_{2t+2}(H)$ of a unique $2t+2$-compact set $H$, which is independant of $t+1$. Moreover, its subset $H_{>0}$ of positive elements is made of elements of the form  $h_{11} + \tau_j j$, where $1\leq j\leq h_{11}-1$, and $\tau_j$ is equal to $1$ if $j$ is a principal hook length ($i.e.~j \in \Delta$) and to $-1$ otherwise. 
\end{lemma}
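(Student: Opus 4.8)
The plan is to establish Lemma~\ref{lemme 2} in two stages: first verifying that the set $E$ is the set of $2t+2$-maximal elements of a genuine $2t+2$-compact set $H$ by checking the defining conditions of compactness via Remark~\ref{remarquedelta}, and second identifying $H_{>0}$ explicitly with the claimed parametrization $\{h_{11}+\tau_j j\}$. The independence of $H$ from $t+1$ should follow once we see that membership in $H$ is governed solely by whether the shifts $\tau_j j$ land on principal hook lengths, a condition phrased in terms of $\Delta$ rather than the ambient modulus.

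First I would set up the verification that $E=\max_{2t+2}(H)$ for a compact set $H$. By the remark preceding the lemma, a finite subset of $\mathbb{N}$ not meeting $0 \bmod 2t+2$ is the set $\max_{2t+2}$ of a compact set precisely when it satisfies an analogue of conditions (i) and (ii) of Remark~\ref{remarquedelta} (one-sidedness of representatives in each congruency class). So the core task is to show that the stated $E$ meets no residue class twice and avoids $0 \bmod 2t+2$. For the elements $h_{11}+\Delta_j$ and $h_{11}-\Delta_j-2t-2$, I would use the fact that $\Delta$ is itself the principal-hook-length set of a $(t+1)$-core pair, so by Remark~\ref{remarquedelta}(i) the values $\Delta_j$ never simultaneously realize a residue and its negative modulo $2t+2$; combined with the $h_{11}-t-1$, $h_{11}-2t+2$, and $2h_{11}-2t+2$ correction terms, this forces the residues appearing in $E$ to be distinct and nonzero. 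Here $h_{11}$ being the maximal principal hook length (hence odd or even according to the parity bookkeeping of $SC$/$DD$) is what pins down the residues of the three exceptional terms.

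Next I would pass to the identification of $H_{>0}$. The natural move is to reconstruct $H$ from $\max_{2t+2}(H)=E$ by saturating downward within each congruency class, as condition (iii) of compactness dictates, and then to reindex. For each $j$ with $1\le j\le h_{11}-1$, the residue class of $h_{11}+j$ modulo $2t+2$ either does or does not contain a principal hook length of the pair, and I expect a short case analysis to show that the maximal positive element of that class inside $H$ is exactly $h_{11}+j$ when $j\in\Delta$ and $h_{11}-j$ when $j\notin\Delta$, which is the content of $\tau_j=\pm1$. This matching is essentially a bookkeeping translation between the ``exposed'' maxima encoded by $\Delta$ and the maximal representatives of $H$.

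The main obstacle will be the second stage: proving the clean formula $h_{11}+\tau_j j$ for all $j$ uniformly, since it requires simultaneously tracking the two partitions $\lambda,\mu$ (odd versus even principal hooks) and showing that the deletion of the top hook $h_{11}$ interacts correctly with every congruency class at once. The delicate point is that $\tau_j$ depends on whether $j$ itself—rather than some shifted quantity—is a principal hook length, so I would have to verify that the bijection $\varphi$ and the recursive hook-deletion description of $\varphi^{-1}$ make the indices line up exactly; establishing the independence of $H$ from $t+1$ amounts to showing the whole construction depends on $2t+2$ only through the residues, which I would confirm by noting that every element of $E$ is written as $h_{11}$ plus a quantity determined by $\Delta$ alone.
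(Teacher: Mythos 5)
You should first be aware that the paper gives no proof of this lemma at all: it is one of the two lemmas introduced with ``their proofs are omitted due to their technical complexities and lengths,'' so your proposal can only be judged on its own merits. Your first stage is the right kind of verification, but it needs repairs. The set $E$ is not in general a subset of $\mathbb{N}$: when no principal hook length lies in the classes $\pm j-(t+1)$ modulo $2t+2$, the definition gives the default value $\Delta_j=j-t-1$, and then $h_{11}+\Delta_j=h_{11}+j-t-1$ and $h_{11}-\Delta_j-2t-2=h_{11}-j-t-1$ can be negative (for $\lambda=(1)$, $\mu=\emptyset$, $t=2$ one gets $E=\{-1,-2,-3,-4,-5\}$). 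So besides distinctness and nonvanishing of residues, you must check that every negative element of $E$ coincides with the forced negative representative in $\{-1,\dots,-2t-1\}$, and that all $2t+1$ nonzero classes are covered (a count: $E$ has $2(t-1)+3=2t+1$ elements). Also, the criterion you invoke is not Remark~\ref{remarquedelta}, which concerns sets of principal hook lengths of pairs of cores, not compact sets; what is needed is the paper's observation that a compact set is determined by its maxima. (A careful attempt would also notice that the last two elements must read $h_{11}-2t-2$ and $2h_{11}-2t-2$ — the statement's $-2t+2$ is a typo — otherwise $E$ does not match the denominators in Lemma~\ref{lemme 1} via Lemma~\ref{lemme han}.)

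The genuine gap is in your second stage. Your key claim — that the maximal positive element of the relevant class of $H$ is $h_{11}+j$ when $j\in\Delta$ and $h_{11}-j$ when $j\notin\Delta$ — is false and confuses two different sets. First, $h_{11}+j$ and $h_{11}-j$ differ by $2j$, hence lie in distinct classes modulo $2t+2$, so they never compete for maximality. Second, $H$ has only $2t+1$ maximal elements (those of $E$), while $\{h_{11}+\tau_j j:1\le j\le h_{11}-1\}$ has $h_{11}-1$ elements, so most of them are not class maxima: in the paper's own Figure~\ref{fig5} example ($t+1=3$, $\Delta=\{13,8,7,2,1\}$, $h_{11}=13$), the element $h_{11}+2=15$ lies in $H_{>0}$ but is not maximal in its class, since $h_{11}+8=21$ lies in the same class modulo $6$. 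What the lemma asserts is an identification of the \emph{entire} positive part $H_{>0}$, i.e.\ of the downward saturation of the positive elements of $E$, with $S:=\{h_{11}+\tau_j j\}$; equivalently one must prove that $S\cup\{-1,\dots,-2t-1\}$ is itself $2t+2$-compact with per-class maxima exactly $E$, and conclude by uniqueness. The downward closure of $S$ is where Remark~\ref{remarquedelta} genuinely enters, through a crossing argument your sketch never formulates: descending from $h_{11}+j$ with $j\in\Delta$ by steps of $2t+2$, the entries $j-k(2t+2)>0$ remain in $\Delta$ by part (ii) of the Remark, so one stays in $S$ while above $h_{11}$; after crossing below $h_{11}$ the entries become $h_{11}-j''$ with $j''\equiv-j$, and part (i) guarantees $j''\notin\Delta$, i.e.\ $\tau_{j''}=-1$, as required (descent from $h_{11}-j$ with $j\notin\Delta$ uses (ii) in contrapositive). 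Finally, the independence of $t+1$ is a property of $H_{>0}$, whose description involves $\Delta$ and $h_{11}$ alone; it is not visible on $E$ itself, whose elements $h_{11}-t-1$, $h_{11}-\Delta_j-2t-2$, etc.\ manifestly involve $t$, so your closing sentence gets this backwards.
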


Now, we are able to derive the following lemma.
\begin{lemma}\label{lemme-recap}
If $(\lambda, \mu)$ is in $SC_{(t+1)} \times DD_{(t+1)}$ and $ (n_1,\ldots, n_t):= \varphi(\lambda, \mu)$, then the following equality holds:
\begin{eqnarray}\label{lemme4g}
\prod_i (2t+2n_i +i)\prod_{i<j}\left((2t+2n_i +i)^ 2-(2t+2n_j +j)^2\right)\hspace*{5cm}\\= \frac{\delta_\lambda \delta_\mu}{c_1}\prod_{h_{ii} \in \Delta} \left(1-\frac{2t+2}{h_{ii}}\right)\left(1-\frac{t+1}{h_{ii}}\right) \prod_{j=1}^{h_{ii}-1}\left( 1-\left(\frac{2t+2}{h_{ii} +\tau_j j} \right)^2\right)\label{lemme4d},
\end{eqnarray}
where $\delta_\lambda$ and $\delta_\mu$ are defined in Section~\ref{defs}.
\end{lemma}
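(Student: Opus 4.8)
The plan is to prove Lemma~\ref{lemme-recap} by induction on the number of principal hooks of the pair $(\lambda,\mu)$, using Lemmas~\ref{lemme 1} and \ref{lemme 2} as the engine of the inductive step and Lemma~\ref{lemme han} to convert products over compact sets into products over maximal elements. First I would dispose of the base case: when both $\lambda$ and $\mu$ are empty, all $n_i=0$, so $\varphi(\lambda,\mu)={\bf 0}$, the left-hand side of \eqref{lemme4g} equals $\prod_i i \cdot \prod_{i<j}(i^2-j^2)$, which is exactly $1/c_1$ up to sign, while the right-hand side is an empty product times $\delta_\lambda\delta_\mu=1$. So the base case reduces to checking the sign and the value of the Vandermonde-type product $\prod_{i<j}(i^2-j^2)\prod_i i$ against $c_1=(-1)^{\lfloor t/2\rfloor}/(1!\,3!\cdots(2t+1)!)$, a routine computation I would confirm separately.

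For the inductive step I would peel off the largest principal hook $h_{11}=\max\Delta$, passing to the pair $(\lambda',\mu')$ obtained by deleting it, with image $\varphi(\lambda',\mu')=(n'_1,\ldots,n'_t)$ differing from ${\bf n}$ only in the coordinate $i_0$ (the index with $\Delta_{i_0}=h_{11}$), where $|n'_{i_0}|=|n_{i_0}|-1$. The strategy is to show that the ratio of the left-hand side of \eqref{lemme4g} for $(\lambda,\mu)$ to that for $(\lambda',\mu')$ equals the ratio of the right-hand sides. By \eqref{liennideltai} the factors $(2t+2)n_i+i$ equal $\sigma_i(t+1+\Delta_i)$, so this left-to-right ratio is precisely the expression computed in Lemma~\ref{lemme 1}, equation~\eqref{eqlemme 1}. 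On the right-hand side, deleting the hook removes exactly the factor indexed by $h_{11}$ in the outer product of \eqref{lemme4d}, so the target ratio is
\begin{equation}
\frac{\delta_\lambda\delta_\mu}{\delta_{\lambda'}\delta_{\mu'}}\left(1-\frac{2t+2}{h_{11}}\right)\left(1-\frac{t+1}{h_{11}}\right)\prod_{j=1}^{h_{11}-1}\left(1-\left(\frac{2t+2}{h_{11}+\tau_j j}\right)^2\right).
\end{equation}
Here I note that deleting the top principal hook changes the Durfee length of exactly one of $\lambda,\mu$ by one, so $\delta_\lambda\delta_\mu/(\delta_{\lambda'}\delta_{\mu'})=-1$, which supplies the crucial leading minus sign demanded by Lemma~\ref{lemme han}.

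The heart of the argument is then to reconcile these two ratios. The product $\prod_{j=1}^{h_{11}-1}(1-(2t+2)^2/(h_{11}+\tau_j j)^2)$ ranges exactly over the positive part $H_{>0}$ of the compact set $H$ identified in Lemma~\ref{lemme 2}, by its explicit description via $h_{11}+\tau_j j$. I would therefore apply Lemma~\ref{lemme han} to $H$: the left side of that lemma is $-\prod_{a\in H,\,a>0}(1-((2t+2)/a)^2)$, and its right side is $\prod_{a\in\max_{2t+2}(H)}(a+2t+2)/a$, where $\max_{2t+2}(H)=E$ is the explicit set from Lemma~\ref{lemme 2}. Substituting $E$ and expanding $\prod_{a\in E}(a+2t+2)/a$ yields precisely the telescoping product of ratios $(h_{11}+\Delta_j+2t+2)(h_{11}-\Delta_j)/((h_{11}+\Delta_j)(h_{11}-\Delta_j-2t-2))$ together with the three exceptional factors $(h_{11}+t+1)/(h_{11}-t-1)$, $h_{11}/(h_{11}-2t-2)$, and $2h_{11}/(2h_{11}-2t-2)$ appearing on the right of \eqref{eqlemme 1}. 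Matching these term-by-term against Lemma~\ref{lemme 1}, and absorbing the sign $-1$ from the Durfee-length change against the $-$ sign in Lemma~\ref{lemme han}, completes the inductive step and hence the lemma.

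The main obstacle I anticipate is the bookkeeping in this final reconciliation: one must verify that the set $E$ of Lemma~\ref{lemme 2} really is $\max_{2t+2}(H)$ for a genuine compact set $H$ whose positive part is enumerated by $h_{11}+\tau_j j$, and that the exceptional elements $\{h_{11}-t-1,\,h_{11}-2t+2,\,2h_{11}-2t+2\}$ contribute exactly the three anomalous factors in \eqref{eqlemme 1} without double-counting or sign error. This is where the congruence constraints modulo $2t+2$ from Remark~\ref{remarquedelta}(i)--(iii) are indispensable, since they guarantee that no two $\Delta_j$ collide in a class and that the compactness hypotheses of Lemma~\ref{lemme han} are met; handling the even modulus $2t+2$ (rather than Han's odd case) is precisely what forces the extra factor involving $2h_{11}$ and the $(1-(t+1)/h_{11})$ term, and carefully tracking these is the technical crux.
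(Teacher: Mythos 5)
Your proposal is correct and follows essentially the same route as the paper's own proof: induction on the number of principal hooks, with the empty pair as base case evaluating to the Vandermonde-type product $1/c_1$, the inductive step given by peeling off the largest principal hook and computing the ratio via Lemma~\ref{lemme 1}, and the reconciliation of that ratio with the desired factor via Lemma~\ref{lemme 2} and Lemma~\ref{lemme han}, the leading minus sign of the latter being absorbed at each step by the change in Durfee length (hence the factor $\delta_\lambda\delta_\mu$ after $D(\lambda)+D(\mu)$ steps). The only detail where you hedge (the base case being $1/c_1$ ``up to sign'') is asserted as an exact equality in the paper.
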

\begin{proof}
Let us just show the idea of the proof. Starting from \eqref{lemme4g}, we transform by using $\varphi$ the products into products involving the $\Delta_i$'s, as we did for identifying \eqref{eqcoeff2} and \eqref{eqcoeff}. Next we do an induction on the cardinality of $\Delta$. To do this, we delete in $\lambda$ or $ \mu$ the hook corresponding to the largest element of $\Delta$, and we rewrite the product over the $\Delta_i$'s by using Lemma~\ref{lemme 1}. The successive right-hand sides of \eqref{eqlemme 1}, obtained by doing the induction, can be simplified into the products on the right-hand side of \eqref{lemme4d} by using Lemmas~\ref{lemme han} and \ref{lemme 2}. There are $D(\lambda)+D(\mu) $ steps in the induction, each of which giving rise to a minus sign. This explains the term $\delta_\lambda \delta_\mu$.  The base case corresponds to empty partitions $\lambda$ and $\mu$. In this case $\Delta_i =i-t-1$, $1\leq i\leq t$, therefore
\begin{eqnarray}
\prod_i \sigma_i(t+1+\Delta_i)\prod_{i<j}\left((t+1+\Delta_i)^ 2-(t+1+\Delta_j)^2\right)=\prod_i i \prod_{i<j}\left( i^2-j^2\right)= \frac{1}{c_1}. 
\end{eqnarray}
\end{proof}

We can finally prove the following result, which will be seen to be equivalent to Theorem~\ref{theoremeintro}.
\begin{theorem}\label{thmprincipal}
The following identity holds for any complex number t:
\begin{equation}\label{eqprincip}
\prod\limits_{n \geq 1}(1-x^n)^{2t^2+t} =\sum_{(\lambda,\mu)} \delta_\lambda \delta_\mu x^{|\lambda|+ |\mu|} \prod\limits_{h_{ii} \in \Delta} \left(1-\frac{2t+2}{h_{ii}}\right)\left(1-\frac{t+1}{h_{ii}}\right) \prod\limits_{j=1}^{h_{ii}-1} \left(1-\left(\frac{2t+2}{h_{ii} + \tau_j j} \right)^2 \right),
\end{equation}
where the sum ranges over pairs $(\lambda, \mu)$ of partitions,  $\lambda$ being self-conjugate and $\mu$ being doubled distinct.
\end{theorem}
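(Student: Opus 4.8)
The plan is to run Macdonald's identity \eqref{equaC} through the bijection $\varphi$ of Theorem~\ref{phi}, clear the coefficients with Lemma~\ref{lemme-recap}, and only at the very end let $t$ become a complex parameter. First I would fix an integer $t\geq 2$. The congruence $v_i\equiv i \pmod{2t+2}$ is equivalent to writing $v_i=(2t+2)n_i+i$ for a unique $n_i\in\mathbb{Z}$, so the sum in \eqref{equaC} over admissible $\mathbf{v}$ is the same as a sum over $\mathbf{n}\in\mathbb{Z}^t$, which by Theorem~\ref{phi} is a sum over pairs $(\lambda,\mu)\in SC_{(t+1)}\times DD_{(t+1)}$. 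The first thing to verify is the exponent of $x$: writing $4t+4=4(t+1)$ and expanding the square,
\begin{equation}
\frac{\|\mathbf{v}\|^2}{4t+4}=(t+1)\sum_{i=1}^{t}n_i^2+\sum_{i=1}^{t}i\,n_i+\frac{1}{4(t+1)}\sum_{i=1}^{t}i^2=(t+1)\|\mathbf{n}\|^2+\mathbf{e}\cdot\mathbf{n}+\frac{2t^2+t}{24},
\end{equation}
where I used $\sum_{i=1}^{t}i^2=t(t+1)(2t+1)/6$. By the weight formula of Theorem~\ref{phi} the first two terms are exactly $|\lambda|+|\mu|$, so the exponent is $|\lambda|+|\mu|+(2t^2+t)/24$. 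Since $\eta(x)^{2t^2+t}=x^{(2t^2+t)/24}\prod_{k\geq1}(1-x^k)^{2t^2+t}$, the modular prefactor $x^{(2t^2+t)/24}$ occurs on both sides and cancels, exactly as in the type $\widetilde{A}$ case recalled in the introduction.

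After this cancellation, \eqref{equaC} becomes
\begin{equation}
\prod_{k\geq1}(1-x^k)^{2t^2+t}=c_1\sum_{(\lambda,\mu)}x^{|\lambda|+|\mu|}\prod_i v_i\prod_{i<j}(v_i^2-v_j^2),
\end{equation}
the sum ranging over $SC_{(t+1)}\times DD_{(t+1)}$. Now I would insert Lemma~\ref{lemme-recap}, which rewrites $\prod_i v_i\prod_{i<j}(v_i^2-v_j^2)$ as $\delta_\lambda\delta_\mu\,c_1^{-1}\prod_{h_{ii}\in\Delta}(\cdots)$. The constant $c_1$ cancels, and what remains is precisely the right-hand side of \eqref{eqprincip}, establishing the identity for every integer $t\geq2$, but with the sum a priori restricted to pairs of $(t+1)$-cores.

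The last task is to drop the core restriction and to pass from integers to complex $t$, which I would accomplish in a single polynomiality step. For a fixed pair $(\lambda,\mu)\in SC\times DD$ the coefficient $\delta_\lambda\delta_\mu\prod_{h_{ii}\in\Delta}(\cdots)$ is a polynomial in $t$ (every factor is one), and $2t^2+t$ is a polynomial in $t$; hence for each fixed $N$ the coefficient of $x^N$ on either side of \eqref{eqprincip} is a polynomial in $t$. Fix $N$ and take any integer $t\geq\max(2,N)$. Every pair with $|\lambda|+|\mu|=N$ then has all of its hook lengths at most $N<t+1$, so none of them can be a multiple of $t+1$ and the pair is automatically a pair of $(t+1)$-cores; thus in degree $N$ the core-restricted sum coincides with the unrestricted one. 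The integer identity proved above therefore shows that the two polynomials in $t$ computing $[x^N]$ of the two sides of \eqref{eqprincip} agree at every integer $t\geq\max(2,N)$, i.e.\ at infinitely many values, so they are identical. As $N$ is arbitrary, \eqref{eqprincip} holds as an identity of formal power series for every complex number $t$, now with the full sum over $SC\times DD$.

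I expect the real obstacle to lie not in this assembly but in Lemma~\ref{lemme-recap} itself (together with the relation \eqref{liennideltai} and Lemmas~\ref{lemme 1}--\ref{lemme 2} feeding it): the point is to convert the Vandermonde-type product $\prod_i v_i\prod_{i<j}(v_i^2-v_j^2)$, whose factors genuinely depend on congruence classes modulo $2t+2$, into a product of hook-length factors that no longer see those classes. Granting those technical results, the two places that still demand care are the bookkeeping of the constant $(2t^2+t)/24$, so that the modular factor cancels on the nose, and the remark that at any fixed $x$-degree the $(t+1)$-core condition becomes vacuous for large $t$, which is what legitimizes replacing the restricted sum by the full one in the polynomial extension.
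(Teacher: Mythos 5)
Your proposal is correct, and it reaches the theorem by a route that genuinely differs from the paper's in one essential step. Both proofs share the same skeleton: specialize Macdonald's identity \eqref{equaC} at an integer $t\geq 2$, transport the sum through the bijection $\varphi$ of Theorem~\ref{phi} (your explicit check that the modular prefactor $x^{(2t^2+t)/24}$ cancels is exactly the bookkeeping behind \eqref{eqcoeff2}), and apply Lemma~\ref{lemme-recap} to obtain \eqref{eqprincip} with the sum restricted to $SC_{(t+1)}\times DD_{(t+1)}$. The divergence is in how the core restriction is removed. The paper proves a term-by-term vanishing statement: for each fixed integer $t\geq 2$, the product $Q$ vanishes for every pair of $SC\times DD$ that is not a pair of $(t+1)$-cores; this occupies most of its proof and rests on the case analysis built on Remark~\ref{remarquedelta} (conditions (i), (ii), and the absence of multiples of $t+1$ in $\Delta$). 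You bypass that analysis entirely with a soft argument: at a fixed degree $N$, every pair with $|\lambda|+|\mu|=N$ has all hook lengths at most $N$, hence is automatically a pair of $(t+1)$-cores once $t\geq N$, so the restricted and unrestricted sums agree in degree $N$ for all sufficiently large integers $t$; a single polynomiality argument then performs both extensions (dropping the core condition and passing to complex $t$) at once, since the two polynomial coefficients agree at infinitely many integers. Both arguments are sound, and your degree-$N$ observation is airtight (the maximal hook length of a partition is at most its weight). Yours is the more economical proof; the paper's approach buys the strictly stronger combinatorial fact that for every fixed integer $t\geq 2$ the right-hand side of \eqref{eqprincip} is supported exactly on pairs of $(t+1)$-cores --- a term-wise statement that does not follow from the final identity and that is of the same nature as the vanishing arguments reused (in reverse) in the proof of Theorem~\ref{equi}.
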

\begin{proof}
Thanks to Macdonald's formula~\eqref{equaC} and Lemma~\ref{lemme-recap}, equation~\eqref{eqprincip} holds if the sum on the right-hand side is over pairs $(\lambda, \mu) \in SC_{(t+1)} \times DD_{(t+1)}$ and if $t$ is a positive integer. We will show that the product 
\begin{equation} Q :=\prod_{h_{ii} \in \Delta}\left( 1-\frac{2t+2}{h_{ii}}\right)\left(1-\frac{t+1}{h_{ii}}\right) \prod_{j=1}^{h_{ii}-1} \left(1-\left(\frac{2t+2}{h_{ii} +\tau_j j} \right)^2 \right)
\end{equation} vanishes if the pair $(\lambda, \mu)$ is not a pair of $t+1$-cores. Indeed, set $(\lambda, \mu) \in SC \times DD$, and let $\Delta$ be the set of principal hook lengths of $\lambda$ and $\mu$. We show that $Q$ vanishes unless $\Delta$ verifies the three hypotheses of $(iii)$ in Remark~\ref{remarquedelta}. Assume $Q \neq 0$.

First, let $h_{ii}>2t+2$  be an element of $\Delta$.  If $h_{ii}-2t-2$  was not a principal hook length, then the term corresponding to $j=h_{ii}-2t-2$ in the second product of $Q$ would vanish by definition of $\tau_j$. So $(ii)$ is satisfied.

Second, let $k, k'$ be nonnegative integers. If $(2k+1)(t+1)+i$ and $(2k'+1)(t+1)-i$ both belonged to $\Delta$, then by induction and according to the previous case, the product $Q$ would vanish if $t+1+i$ and $t+1-i$ did not belong to $\Delta$. But if $t+1+i$ and $t+1-i$ belonged to $\Delta$, the term $1-\left(\frac{2t+2}{(t+1+i)+(t+1-i)}\right)^2$ would vanish. So $(2k+1)(t+1)+i$ and $(2k'+1)(t+1)-i$ can not be both principal hook lengths if $Q$ is nonzero. So $(i)$ is satisfied.

Third, if $\Delta$ contains multiples of $t+1$, we denote by $h_{ii}$ the smallest such principal hook length. If  $h_{ii}= t+1$ or $h_{ii}=2t+2$, then the first term of the product $Q$ would vanish. Otherwise, $h_{ii}-2t-2$ does not belong to $\Delta$ by minimality, and so the term corresponding to $j=h_{ii}-2t+2$ in the second product of $Q$  would vanish.

 According to Remark~\ref{remarquedelta}, if $Q \neq 0$, then  $(\lambda, \mu)$ is a pair of $(t+1)$-cores. So formula \eqref{eqprincip} remains true for any positive integer $t$ if the sum ranges over $SC \times DD$. To conclude, we give a polynomiality argument which generalizes \eqref{eqprincip} to all complex numbers $t$. To this aim, we can use the following formula:
\begin{equation}
\prod_{k \geq 1} \frac{1}{1-x^k} = \mbox{exp}\left(\sum_{k \geq 1} \frac{x^k}{k(1-x^k)}\right),
\end{equation}
in order to rewrite the left-hand side of~\eqref{eqprincip} in the following form:
\begin{equation}\label{cotegauche}
\mbox{exp}\left(-(2t ^2+t)\sum_{k \geq 1} \frac{x^k}{k(1-x^k)}\right).
\end{equation}
Let $m$ be a nonnegative integer. The coefficient $C_m(t)$ of $x^m$ on the left-hand side of~\eqref{eqprincip} is a polynomial in $t$, according to~\eqref{cotegauche}, as is the coefficient $D_m(t)$ of $x^m$ on the right-hand side. Formula~\eqref{eqprincip} is true for all integers $t \geq 2$, it is therefore still true for any complex number $t$.
\end{proof}

Let $(\lambda, \mu)$ be in $SC \times DD$, with set of principal hook lengths $\Delta$. We denote by $2\Delta$ the set of elements of $\Delta$ multiplied by $2$. Note that we can uniquely associate to $(\lambda, \mu)$ a partition $\nu \in DD$ with set of principal hook lengths $2\Delta$.
\begin{theorem}\label{bijcouple}
The partition $\nu$ satisfies $|\lambda|+|\mu| = |\nu|/2$, $\delta_\lambda \delta_\mu = \delta_\nu$, and:
\begin{align}\prod\limits_{h_{ii} \in \Delta} \left(1-\frac{2t+2}{h_{ii}}\right)\left(1-\frac{t+1}{h_{ii}}\right) \prod\limits_{j=1}^{h_{ii}-1}\left(  1-\left(\frac{2t+2}{h_{ii} +\tau_j j} \right)^2 \right)= \prod\limits_{h \in \nu} \left(1-\frac{2t+2}{h \, \varepsilon_h}\right)\label{eqbij},
\end{align}
 where $\varepsilon_h$ is equal to $-1$ if $h$ is the hook length of a box  strictly above the principal diagonal, and to $1$ otherwise.
\end{theorem}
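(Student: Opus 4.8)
The plan is to establish the three assertions in increasing order of difficulty, leaving the product identity \eqref{eqbij} for last. The weight and sign statements are immediate consequences of the fact that the principal hooks of a partition tile its Ferrers diagram, so that its weight equals the sum of its principal hook lengths and $D(\cdot)$ equals their number. Thus $|\lambda|+|\mu|=\sum_{h\in\Delta}h$ while $|\nu|=\sum_{h\in\Delta}2h$, giving $|\lambda|+|\mu|=|\nu|/2$; and since $\delta_\kappa=(-1)^{D(\kappa)}$, both sides of $\delta_\lambda\delta_\mu=\delta_\nu$ equal $(-1)^{|\Delta|}$, doubling being a bijection $\Delta\to 2\Delta$.

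For \eqref{eqbij} I would regroup its right-hand side, a product over all boxes of $\nu$, according to principal hooks: every box lies in exactly one principal hook, the one indexed by $\min(r,c)$ for a box in row $r$ and column $c$. Fix $h_{ii}\in\Delta$; the associated principal hook of $\nu$ has corner hook length $2h_{ii}$. The corner lies on the diagonal, so $\varepsilon=1$ and it contributes $1-\frac{2t+2}{2h_{ii}}=1-\frac{t+1}{h_{ii}}$. The remaining boxes split into the arm side ($c>r$, hence $\varepsilon=1$) and the leg side ($r>c$, the boxes ``strictly above the diagonal'' in the French convention, hence $\varepsilon=-1$). After writing each difference of squares on the left as $(1-\cdots)(1+\cdots)$, matching \eqref{eqbij} reduces to a local identity, principal hook by principal hook.

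That local identity is forced by the following lemma, which I expect to be the crux: in the principal hook of $\nu$ with corner hook length $2h_{ii}$, the multiset of leg hook lengths is $\{h_{ii}+\tau_j j : 1\le j\le h_{ii}-1\}$ and that of arm hook lengths is $\{h_{ii}\}\cup\{h_{ii}+\tau_j j : 1\le j\le h_{ii}-1\}$, where $\tau_j=1$ if $j\in\Delta$ and $-1$ otherwise. Granting it, the leg factors $\prod(1+\frac{2t+2}{h})$ and the non-isolated arm factors $\prod(1-\frac{2t+2}{h})$ combine into $\prod_{j=1}^{h_{ii}-1}\bigl(1-(\frac{2t+2}{h_{ii}+\tau_j j})^2\bigr)$, the isolated arm value $h_{ii}$ yields $1-\frac{2t+2}{h_{ii}}$, and the corner yields $1-\frac{t+1}{h_{ii}}$, reproducing the $h_{ii}$-factor of the left-hand side exactly.

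To prove the lemma I would pass to the abacus of $\nu$: beads at the half-integers $\{\nu_k-k+\frac12\}$ and holes at $\{k-\nu'_k-\frac12\}$, a hook being a pair (bead $p$, hole $q<p$) of length $p-q$. The principal hook with corner $2h_{ii}$ carries the bead $p_i=h_{ii}+\frac12$ and the corner hole $q_i=-h_{ii}+\frac12$; its arm hook lengths are $\{p_i-q : q\text{ a hole},\ q_i<q<p_i\}$ and its leg hook lengths are $\{p-q_i : p\text{ a bead},\ q_i<p<p_i\}$. The decisive structural input is that a doubled distinct partition, characterized by $b_k=a_k-1$ in Frobenius coordinates, has an abacus antisymmetric about $\frac12$: for every half-integer $p\neq\frac12$ exactly one of $p$ and $1-p$ is a bead, while $\frac12$ is a hole. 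Equivalently, for $n\ge 1$ the position $n+\frac12$ is a bead iff $n\in\Delta$ and $-n+\frac12$ is a bead iff $n\notin\Delta$, which is precisely the translation of $b_k=a_k-1$, i.e. $\{\nu'_k-k\}=\{h-1 : h\in\Delta\}$ on the nonnegative side. Substituting this symmetry into the two bead/hole descriptions turns the conditions ``$q$ is a hole'' and ``$p$ is a bead'' into the alternative ``$j\in\Delta$ or $j\notin\Delta$'', yielding the sets $\{h_{ii}+\tau_j j\}$ and establishing the lemma. The main obstacle is exactly this lemma --- pinning down the antisymmetry about $\frac12$ (including the exceptional fixed point) and tracking the arm and leg hook lengths through it; once the symmetry is secured, the rest is bookkeeping.
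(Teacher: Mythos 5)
Your proposal is correct, and (as far as can be judged) it takes a genuinely different route from the paper's. The paper omits the proof of Theorem~\ref{bijcouple} entirely, indicating only that the difficult point \eqref{eqbij} is proved ``by an induction on the number of principal hooks,'' i.e.\ in the same peel-off-the-largest-hook style as Lemmas~\ref{lemme 1}--\ref{lemme-recap}. You instead give a direct, non-inductive argument: group the right-hand side of \eqref{eqbij} by principal hooks of $\nu$ and prove a local multiset identity inside each one. Your crux lemma is correct, and your abacus argument for it goes through: a doubled distinct $\nu$ with principal hook lengths $2\Delta$ has Frobenius coordinates $a_k=h_k$, $b_k=h_k-1$ for $h_k\in\Delta$, so its positive beads sit exactly at $h+\tfrac12$, $h\in\Delta$, its negative holes exactly at $-h+\tfrac12$, $h\in\Delta$, and $\tfrac12$ is a hole; reading arm hooks of the $i$-th principal hook as $\{p_i-q: q \mbox{ hole}, q_i<q<p_i\}$ and leg hooks as $\{p-q_i: p \mbox{ bead}, q_i<p<p_i\}$ with $p_i=h_{ii}+\tfrac12$, $q_i=-h_{ii}+\tfrac12$ then yields the multisets $\{h_{ii}\}\cup\{h_{ii}+\tau_j j\}$ and $\{h_{ii}+\tau_j j\}$, with the hole at $\tfrac12$ accounting for the isolated factor $1-\frac{2t+2}{h_{ii}}$, the corner for $1-\frac{t+1}{h_{ii}}$, and the arm/leg pairs combining into the squared factors (I checked this against small cases such as $\Delta=\{1,2\}$, $\nu=(3,3)$, where it holds, including your reading of ``strictly above the diagonal'' as the leg boxes in French convention). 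The weight and sign claims are indeed immediate from the principal-hook decomposition. What your approach buys: a self-contained structural explanation of where the $\tau_j$'s come from (they record the bead/hole antisymmetry of $\nu$ about $\tfrac12$), plus a finer lemma of independent interest; what the paper's hinted induction buys is uniformity with the rest of its machinery, where deleting the largest principal hook is the recurring tool.
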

We omit the proof here; the difficult point  being \eqref{eqbij}, whose proof uses an induction on the number of principal hooks.
With Theorems~\ref{thmprincipal} and \ref{bijcouple}, Theorem~\ref{theoremeintro} straightforwardly follows.

\section{Some applications}\label{section3}
We give here some of the many applications of Theorem~\ref{theoremeintro}.  First, taking $t=-1$ in \eqref{eqtheoremeintro} yields the following famous expansion, where the sum ranges over partitions with distinct parts:
\begin{equation} 
\prod_{n \geq 1}(1-x^n) = \sum_\lambda (-1)^{\#\{\mbox{parts~of~}\lambda\}} x ^{|\lambda|}.
\end{equation}

Second, from \eqref{eqtheoremeintro}, \eqref{nekrasov} and the classical hook formula (see for instance \cite{EC}), we derive its following symplectic analogue, valid for any positive integer $n$:
\begin{equation}
\sum_{\stackrel{\lambda \in DD}{|\lambda|= 2n}} \prod_{h \in \mathcal{H}(\lambda)}\frac{1}{h} = \frac{1}{2^n n!}.
\end{equation}

Finally, we can prove the following theorem, which is surprising regarding the right-hand sides of the formulas, and which establishes a link between Macdonald's formulas in types $\widetilde{C}$, $\widetilde{B}$, and $\widetilde{BC}$.

\begin{theorem}\label{equi}
The following families of formulas are all generalized by Theorem~\ref{theoremeintro}:
\begin{itemize}\itemsep-2.5pt
\item[(i)]Macdonald's formula \eqref{equaC} in type $\widetilde{C}_t$ for any integer $t \geq 2$;
\item[(ii)]Macdonald's formula in type $\widetilde{B}_t$ for any integer $t \geq 3$:
\begin{equation}\label{equaB}
\eta(x)^{2t^2+t} = c_1 \sum_{{\bf v}} x^{\|{\bf v}\|^ 2/8(2t-1)} \prod_i v_i \prod_{i<j}(v_i^2-v_j^2) , 
\end{equation}
where the sum ranges over $t$-tuples ${\bf v} :=(v_1,\ldots, v_t) \in \mathbb{Z}^t$ such that $v_i \equiv 2i-1 \mbox{~mod~} 4t-2$ and $v_1+\cdots+v_t= t^2 \mbox{~mod~} 8t-4$;
\item[(iii)]Macdonald's formula in type $\widetilde{BC}_t$ for any integer $t\geq 1$:\end{itemize}
\begin{equation}\label{equaBC}
\eta(x)^{2t^2-t} =c_2 \sum_{{\bf v}} x^{\|{\bf v}\|^2 /8(2t+1)} (-1)^{(v_1+\cdots+v_t-t)/2}\prod_{i<j}(v_i^2-v_j^2) , ~\mbox{with~} c_2:= \frac{(-1)^{(t-1)/2}t!}{1! 2! \cdots (t-1)!},
\end{equation}
\hspace*{0.9cm}where the sum ranges over $t$-tuples ${\bf v} :=(v_1,\ldots, v_t) \in \mathbb{Z}^t$ such that $v_i \equiv 2i-1 \mbox{~mod~} 4t+2$.

\end{theorem}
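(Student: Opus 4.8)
The plan is to prove Theorem~\ref{equi} by showing that each of the three Macdonald formulas \eqref{equaC}, \eqref{equaB}, and \eqref{equaBC} is a specialization or reindexing of the master identity in Theorem~\ref{thmprincipal} (equivalently Theorem~\ref{theoremeintro}), which holds for \emph{all} complex $t$. The point of item (i) is essentially tautological: formula \eqref{equaC} was the very starting point of the derivation of Theorem~\ref{thmprincipal}, so it suffices to read the chain \eqref{eqcoeff2}--\eqref{eqcoeff} backwards, recalling that the substitution $v_i=(2t+2)n_i+i$ together with the bijection $\varphi$ of Theorem~\ref{phi} turns the vector sum in \eqref{equaC} into the sum over $SC_{(t+1)}\times DD_{(t+1)}$ appearing on the right of \eqref{eqprincip}. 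The exponent $2t^2+t$ matches verbatim, so (i) is immediate.

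For items (ii) and (iii) the strategy is a change of summation index that sends the $\widetilde{B}$ and $\widetilde{BC}$ congruence conditions onto the $\widetilde{C}$ one, after first identifying the exponents. The key numerical observation is that the exponent of $\eta$ in \eqref{equaB} is again $2t^2+t$, while in \eqref{equaBC} it is $2t^2-t$; since $2(t')^2+t'=2t^2-t$ for $t'=-t$ (and $2(t')^2+t'=2t^2+t$ for $t'=t$ up to the reindexing below), both exponents are values of the single polynomial $2s^2+s$ that governs Theorem~\ref{theoremeintro}. Concretely, I would substitute a suitable half-integer or negative value of the parameter $t$ into \eqref{eqtheoremeintro}: because Theorem~\ref{theoremeintro} is valid for every complex $t$, plugging in the value for which $2t^2+t$ equals the target exponent produces a hook-length expansion of the correct power of $\eta$, and the task reduces to matching it term-by-term against the right-hand sides of \eqref{equaB} and \eqref{equaBC}.

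The heart of the argument is therefore a lattice/vector bijection between the index sets. For type $\widetilde{B}_t$ the vectors satisfy $v_i\equiv 2i-1 \bmod 4t-2$ with the extra global constraint $v_1+\cdots+v_t\equiv t^2 \bmod 8t-4$, whereas for $\widetilde{C}$ the condition is $v_i\equiv i \bmod 2t+2$; I would produce an explicit affine map on $\mathbb{Z}^t$ (of the form $v_i\mapsto \alpha v_i+\beta_i$) carrying one residue system to the other, check that it is a bijection, and verify that it transports the quadratic form $\|{\bf v}\|^2/8(2t-1)$ to the normalization $\|{\bf v}\|^2/(4t'+4)$ and the Vandermonde-type product $\prod_i v_i\prod_{i<j}(v_i^2-v_j^2)$ to its $\widetilde{C}$ counterpart, up to the constant $c_1$. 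The same scheme applies to $\widetilde{BC}_t$, where the sign $(-1)^{(v_1+\cdots+v_t-t)/2}$ and the absence of the factor $\prod_i v_i$ (replaced by the constant $c_2$ with its extra $t!$) must be absorbed by the reindexing; here I expect the parity/sign bookkeeping to be the delicate part.

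The main obstacle will be exactly this sign-and-constant matching in case (iii): one must show that under the chosen reindexing the product $\prod_i v_i$ degenerates correctly, that the global sign $(-1)^{(v_1+\cdots+v_t-t)/2}$ reproduces the $\delta_\lambda$ arising from the Durfee-square parity in \eqref{eqtheoremeintro}, and that the constants $c_1$ and $c_2$ (differing by the factor $t!$) are reconciled by the Jacobian of the affine substitution together with the shift in $t$. I would handle this by tracking both sides as polynomials in $t$ (as in the proof of Theorem~\ref{thmprincipal}) and invoking the polynomiality/analytic-continuation principle: once the two hook-length expansions agree for infinitely many integer values of the parameter, they agree identically, so it is enough to verify the constants and signs on the integer specializations where both Macdonald identities are known to hold.
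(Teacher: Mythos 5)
Your opening move is the same as the paper's---exploit the validity of \eqref{eqtheoremeintro} for \emph{all} complex $t$ and specialize the parameter to hit the exponents $2t^2+t$ and $2t^2-t$---and your reading of item (i) as essentially the derivation of Theorem~\ref{thmprincipal} run backwards is correct. But after that the proposal goes off the rails. First, the root you actually compute for (iii), $t'=-t$, is the wrong root of the quadratic: at $t'=-t$ the hook factors become $1-\frac{2-2t}{h\,\varepsilon_h}$, so the vanishing of the product is governed by the \emph{even} number $2t-2$, and no reindexing can then produce the type $\widetilde{BC}_t$ lattice, whose congruences run modulo $4t+2=2(2t+1)$. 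The proof requires the \emph{half-integer} roots (in the paper: $u:=-t-1/2$ for type $\widetilde{B}_u$, and the analogous half-integer shift for type $\widetilde{BC}_\ell$), precisely so that $|2t+2|$ becomes the odd number $2u-1$, respectively $2\ell+1$; this oddness is what matches the B/BC moduli and is the reason these cases are genuinely new rather than re-readings of type $\widetilde{C}$.

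Second, and more seriously, the engine you propose for (ii) and (iii)---a coordinatewise affine map $v_i\mapsto\alpha v_i+\beta_i$ carrying the $\widetilde{B}$/$\widetilde{BC}$ residue systems onto the $\widetilde{C}$ one---cannot exist. For a non-integer parameter there is no type $\widetilde{C}$ \emph{vector} sum at all to map onto (which is exactly why the partition-indexed form \eqref{eqtheoremeintro} is indispensable), and for equal integer parameters a counting argument kills it: the $\widetilde{C}_t$ index set is a coset of $(2t+2)\mathbb{Z}^t$, whereas writing $v_i=(4t-2)m_i+2i-1$ shows the $\widetilde{B}_t$ index set corresponds to $\{\,{\bf m}: m_1+\cdots+m_t \equiv 0 \bmod 2\,\}$, a coset of an index-two sublattice of $(4t-2)\mathbb{Z}^t$; demanding that $v\mapsto\alpha v+\beta$ be a bijection between these sets \emph{and} transport $\|{\bf v}\|^2/(4t+4)$ to $\|{\bf v}\|^2/(8(2t-1))$ forces two incompatible values of $\alpha$ (already at $t=3$ one gets $2^{-1/3}=4/5$, false). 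The paper's actual route has two steps that are absent from your plan: (1) after the half-integer substitution, prove that the product in \eqref{eqtheoremeintro} \emph{vanishes} for every partition outside a restricted family (for $\widetilde{B}_u$, those with no hook length $2u-1$ strictly above the diagonal; for $\widetilde{BC}_\ell$, the $(2\ell+1)$-cores)---without this collapse, your ``term-by-term matching'' of a sum over all of $DD$ against a sparse lattice sum is impossible; (2) construct a bijection analogous to $\varphi$ of Theorem~\ref{phi} between that restricted family and the B (resp.\ BC) lattice, and run analogues of Lemmas~\ref{lemme han}--\ref{lemme-recap} in reverse to reassemble $\prod_i v_i\prod_{i<j}(v_i^2-v_j^2)$ from the hook products. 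Finally, your closing appeal to polynomiality anchored ``on the integer specializations where both Macdonald identities are known to hold'' is circular in this context: Theorem~\ref{equi} asserts that \eqref{equaB} and \eqref{equaBC} are \emph{consequences} of Theorem~\ref{theoremeintro}, so they cannot be taken as known in the verification step.
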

\begin{proof} Here we do not give details of the proof (due to its length) and just present the ideas. By substituting $u := -t-1/2$  in \eqref{eqtheoremeintro}, and considering the positive integral values of $u$, we first prove that the product on the right-hand side vanishes for all partitions $\lambda$, except for those such that $\lambda$ does not contain a hook length equal to $2u-1$ for boxes strictly above the diagonal. By using some lemmas analogous to Lemmas~\ref{lemme han}--\ref{lemme-recap} (in the reverse sense) and a bijection analogous to $\varphi$, we manage to derive Macdonald's formula in type $\widetilde{B}_u$ for any integer $u \geq 3$. 
The same reasoning applies for type $\widetilde{BC}_\ell$ by doing the substitution $ \ell := t-1/2$ for integers $\ell \geq 1$. The partitions $\lambda$ that occur here are $2\ell+1$-cores.
\end{proof}

A natural question, and to which we were not able to answer, which arises is the following: is there a generalization analogous to Theorem~\ref{theoremeintro} for type $\widetilde{D}$?

\bibliographystyle{amsplain}

\begin{thebibliography}{SGA}


\bibitem[1]{GKS}
F. Garvan, D. Kim, D. Stanton, \textit{Cranks and t-Cores}, Invent. Math. \textbf{1} (1990), pp. 771--778. 
\bibitem[2]{HAN}
G. Han, \textit{The Nekrasov-Okounkov hook length formula: refinement, elementary proof, extensions and applications}, Ann. Inst. Fourier \textbf{1} (2010), pp 1--29.
\bibitem[3]{IEA} 
A. Iqbal, S. Nazir, Z. Raza, Z. Saleem, \textit{Generalizations of Nekrasov-Okounkov Identity}, Ann. Comb. \textbf{16} (2012), pp 745--753.
\bibitem[4]{GK}
J. Gordon, A. Kerber, \textit{The representation theory of the symmetric group}, Encyclopedia of Mathematics and its Applications, \textbf{16}, Addison-Wesley Publishing, MA, 1981.
\bibitem[5]{ARS}
I. G Macdonald, \textit{Affine Root Systems and Dedekind's $\eta$-Function}, Invent. Math. \textbf{2} (1972),  pp. 91--143.
\bibitem[6]{NO}
N. A. Nekrasov, A. Okounkov, \textit{Seiberg-Witten theory and random partitions}, The unity of Mathematics,  Progr. Math., \textbf{244} (2006), pp. 525--596.
\bibitem[7]{JPS}
J. P. Serre, \textit{Cours d'arithm\'etique}, Collection SUP: "Le Math\'ematicien", Presses Universitaires de France, Paris 1970.
\bibitem[8]{EC}
R. P. Stanley, \textit{Enumerative Combinatorics, vol. 2}, Cambridge University Press, 1999.
\end{thebibliography}

\end{document}